\documentclass[12pt]{amsart}

\usepackage{amssymb}

\usepackage{amsmath}

\newtheorem{theorem}{Theorem}[section]
\newtheorem{lemma}[theorem]{Lemma}

\newtheorem{fact}[theorem]{Fact}

\newtheorem{corollary}[theorem]{Corollary}
\newtheorem{claim}[theorem]{Claim}
\theoremstyle{definition}
\newtheorem{definition}[theorem]{Definition}

\let \restr = \upharpoonright
\let \bs = \backslash
\let \into = \longrightarrow

\let \tld = \tilde

\let \sub = \subseteq
\let \elsub = \preccurlyeq

\let \av = \arrowvert
\let \ov = \overline
\let \a = \alpha
\let \b = \beta
\let \g = \gamma
\let \d = \delta
\let \e = \epsilon

\let \k = \kappa
\let \m = \mu
\let \n = \nu

\let \t = \theta

\let \D = \Delta
\let \T = \Theta
\let \s = \sigma
\let \x = \xi
\let \z= \zeta
\let \o = \omega

\let \al = \aleph
\let \la = \langle
\let \ra = \rangle
\let \mtcl = \mathcal
\let \mtbb = \mathbb
\let \it = \item

\title{Measuring club--sequences with large continuum}

\author[D. Asper\'o]{David Asper\'o}

\thanks{Mota was supported by the Austrian Science Fund FWF Project P22430. Both authors were also partially supported by Ministerio de
Educaci\'{o}n y Ciencia Project MTM2008--03389 (Spain) and by Generalitat de Catalunya Project 2009SGR--00187 (Cata\-lonia).}

\address{David Asper\'o, Institute of Discrete Mathematics and Geometry, TU Wien, Wiedner Hauptstrasse 8-10/104,
1040 Wien, Austria}

\email{david.aspero@tuwien.ac.at}

\author[M.A. Mota]{Miguel Angel Mota}

\address{Miguel Angel Mota, Kurt G\"odel Research Center for
  Mathematical Logic, W\"ahringer Stra\ss e 25, 1090 Wien,
  Austria}

\email{motagaytan@gmail.com}

\begin{document}

\subjclass[2000]{03E50, 03E35, 03E05}
\maketitle

\date{}

\maketitle
\pagestyle{myheadings}
\markright{Measuring with large continuum}

\begin{abstract}
\emph{Measuring}, as defined by J. Moore, says that for every sequence $(C_\d)_{\d<\o_1}$ with each $C_\d$ being a closed subset of $\d$ there is a club $C\sub\o_1$ such that for every $\d\in C$,
a tail of $C\cap\d$ is either contained in or disjoint from $C_\d$. We answer a question of Moore by building a forcing extension satisfying measuring together with $2^{\al_0}>\al_2$. The construction works over any model of $\textsc{ZFC}$ and can be described as a finite support forcing iteration with systems of countable structures as side conditions and with symmetry constraints.
\end{abstract}

\section{Introduction}

One of the most frustrating problems faced by set theorists working with iterated proper forcing is the lack of techniques for producing models in which the continuum has size greater than the second uncountable cardinal.
In this paper we solve this problem in the specific case of measuring, a very strong negation of Club Guessing at $\o_1$ introduced by Justin Moore (see \cite{EISWORTH-MOORE}). This work is a natural continuation of our previous work in \cite{AM1} (see also \cite{AM2}), where we showed $2^{\al_0}>\al_2$ to be consistent together with a number of consequences of the Proper Forcing Axiom ($\textsc{PFA}$).

Our approach in \cite{AM1} consisted in building, starting from $\textsc{CH}$, a certain type of finite support forcing iteration of length $\k$ (in a general sense of `forcing iteration') using what one may describe as finite `symmetric systems' of countable elementary substructures of a fixed $H(\k)$\footnote{This $\k$ is exactly the value that $2^{\al_0}$ attains at the end of the construction.} as side conditions. These systems of structures were added at the first stage of the iteration.
Roughly speaking, the fact that the supports of the conditions in the iteration were finite ensured that the inductive proofs of the relevant facts -- mainly that the iteration has the $\al_2$--chain condition and that it is proper -- went through. The use of the sets of structures as side conditions was crucial in the proof of properness.\footnote{For more on the motivation for this type of construction see \cite{AM1} and \cite{AM2}.}

In the present paper we add a higher degree of `local' symmetry in the `single step' forcing notions involved and use it to build a model of measuring.

\begin{definition} (Moore, \cite{EISWORTH-MOORE})
\emph{Measuring} is the following statement: Let $\mtcl C=(C_\d)_{\d<\o_1}$ be such that each $C_\d$ is a closed subset of $\d$ (where $\d$ is endowed with the order topology). Then there is a club $C\sub\o_1$ which \emph{measures} $C_\d$ for every $\d\in C$. Specifically, this means that for every $\d\in C$ there is some $\a<\d$ with either $(C\cap\d)\bs\a\sub C_\d$ or $(C\bs\a)\cap C_\d=\emptyset$. We will also say that \emph{$C$ measures $\mtcl C$}.
\end{definition}

Measuring is of course equivalent to its restriction to club--sequences, where $(C_\d)_{\d<\o_1}$ is a club--sequence if every $C_\d$ is a club of $\d$.
Also, measuring clearly implies that for every ladder system $(C_\d)_{\d\in Lim\cap\o_1}$ there is a club $C\sub\o_1$ such that $C\cap C_\d$ is finite for all $\d\in C$,\footnote{$(C_\d)_{\d\in Lim\cap\o_1}$ is a ladder system if each $C_\d$ is a cofinal subset of $\d$ of order type $\o$.
The above statement is the negation of what is usually called \emph{Weak Club Guessing (for $\o_1$)}.} and that for every sequence $(f_\a)_{\a\in\o_1}$ , if each $f_\a$ is a continuous function from $\a$ into $\o$, then there is a club $C\sub\o_1$ such that $f_\d``\,C\neq\o$ for every $\d\in C$.\footnote{This is the negation of a statement Moore calls $\mho$ (mho) (see \cite{MOORE2}).}
Finally, it is easily seen to follow from $\textsc{PFA}$, and even from its bounded form $\textsc{BPFA}$.\footnote{The proof involves a forcing for adding a suitable club with side conditions. The `single--step forcing' in our construction is a variation of this type of forcing.}




Our main theorem is the following.

\begin{thm}\label{mainthm}
$(\textsc{CH}$) Let $\k$ be a cardinal such that $2^{<\k}=\k$ and $\k^{\al_1}=\k$. There is a proper poset $\mtcl P$ with the $\al_2$--chain condition such that the following statements hold after forcing with $\mtcl P$.

\begin{itemize}
\it[(1)] $2^{\al_0}=\k$

\it[(2)] Measuring
\end{itemize}
\end{thm}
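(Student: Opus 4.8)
The plan is to realize $\mtcl P$ as a finite support forcing iteration of length $\k$ in which, at each stage, we force with a single-step poset adding a club of $\o_1$ that measures one specified club-sequence. For a club-sequence $\mtcl C=(C_\d)_{\d<\o_1}$ the single-step forcing will have conditions $(x,\mtcl N)$ in which $x$ is a finite subset of $\o_1$ approximating the measuring club $C$, decorated at each limit ordinal $\d\in x$ by a choice of whether $C$ is to be eventually contained in or eventually disjoint from $C_\d$ (together with a witnessing ordinal $\a<\d$ below which the decision need not yet hold), and $\mtcl N$ is a finite symmetric system of countable elementary substructures of some $H(\chi)$ acting as side conditions and constraining how $x$ may be filled in below the ordinals appearing in the structures. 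The whole iteration carries one ambient symmetric system governing all coordinates, and the new ingredient relative to \cite{AM1} is the additional local symmetry imposed on the single-step data.

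First I would fix the bookkeeping. Using $\k^{\al_1}=\k$, there are only $\k$ many nice $\mtcl P$-names for club-sequences, so I can enumerate them in order type $\k$ and arrange that every such name is treated at some stage after it has been decided by the iteration; this is what will eventually yield Measuring in the extension. Second, a routine density argument shows that the single-step forcing does its job: the union of the $x$-parts along a generic filter is a club of $\o_1$, and the decisions recorded at the limit ordinals guarantee that this club measures $\mtcl C$. Third -- the main structural point -- I would prove properness of the iteration by induction on its length: given a countable $N\prec H(\theta)$ containing all relevant parameters and a condition $p\in N$, I would produce an $(N,\mtcl P)$-generic extension of $p$ by adding (a copy of) $N\cap H(\chi)$ to the side-condition system and using the symmetry to amalgamate the part of the generic object lying below $N$ with the working part of $p$.

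The hard part will be to maintain the $\al_2$--chain condition simultaneously with properness, and this is exactly where the local symmetry is needed. Given $\al_2$ many conditions, $\textsc{CH}$ together with a $\D$-system argument lets me thin out to a family all of whose finite working parts and symmetric-system shapes agree on a common root, so the combinatorial data is uniform; the genuine obstacle is to show that any two such conditions can be amalgamated into a common extension even when their symmetric systems differ above the root. The local symmetry supplies the isomorphisms needed to match up the differing structures, and the content of the argument is to verify that the merged system is again a legitimate symmetric system, so that the union of the two conditions is a condition. Proving this closure-under-merging property for symmetric systems is the technical heart of the construction and the step I expect to be most delicate. Once properness and the $\al_2$--cc are established, the standard consequences follow: cardinals and cofinalities are preserved, the bookkeeping forces Measuring, and a counting argument using $2^{<\k}=\k$ shows $2^{\al_0}=\k$ in the extension.
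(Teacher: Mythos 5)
Your overall architecture (a finite--support iteration of length $\k$ with one ambient symmetric system of countable structures as side conditions, bookkeeping via $\k^{\al_1}=\k$, a $\D$--system argument plus \textsc{CH} for the $\al_2$--chain condition, and properness by adjoining $N$ to the side conditions) matches the paper. But there is a genuine gap in your single--step forcing, and it sits exactly where the new idea of the paper lives. You let a condition record, at each limit point $\d$ of its working part, a \emph{free choice} of whether the generic club is to be eventually contained in or eventually disjoint from $C_\d$. Neither commitment can be made freely: the club being added consists of ordinals $\d_M$ for models $M$ in the generic symmetric system, so ``eventually contained in $C_\d$'' is only achievable if sufficiently many (in the sense of Cantor--Bendixson rank) such $\d_M$ below $\d$ lie in $C_\d$, and dually for ``eventually disjoint.'' A condition that has made the unachievable choice at $\d=\d_N$ cannot be extended to meet the dense sets coming from $N$, so the genericity argument, and with it the whole induction, breaks. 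The paper's forcing $\T_{\dot{\mtcl C}}$ is deliberately asymmetric: conditions $(f,b,\mtcl O)$ only ever commit to the \emph{disjoint} alternative (the function $b$), and only when the rank condition (3.3) certifies that the set of $M$ with $\d_M\notin C_{f(\n)}$ has rank at least $\n$ at the relevant $N$; when no such commitment is possible, the symmetry of $\mtcl O$ together with the rank condition (2.2) forces a tail of the club below $f(\n)$ into $C_{f(\n)}$ \emph{automatically} (this is the content of Lemma \ref{measuring}). Without this dichotomy, the ``routine density argument'' you invoke for measuring does not exist.

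A secondary misdiagnosis: you place the technical heart in the $\al_2$--c.c.\ amalgamation. In the paper that step (Lemma \ref{cc}) is the comparatively standard one, obtained from \textsc{CH}, a $\D$--system argument and the merging Fact \ref{fact2}. The genuinely delicate steps are clause $(2)_\b$ of Lemma \ref{horribilis} --- where a condition $q$ must be amalgamated with a condition $t$ found inside a model $M\in N$ of the side--condition system, controlling via Claim \ref{toronto} and Lemma \ref{self-deciding} how the imported structures interact with the pending $b$--commitments at every coordinate of $N$ --- and the rank/symmetry argument of Lemma \ref{measuring} just described. Note also that closure of the generic club (continuity of $\dot F_\b$ at limits) is not free; it is precisely what the rank condition (2.2) is for.
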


The rest of the paper is devoted to proving Theorems \ref{mainthm}, and is organized as follows: Section \ref{the construction} starts with the central notion of symmetric system of structures. We then proceed, in Subsection \ref{The definition}, to the definition of a sequence $(\mtcl P_\a)_{\a\leq\k}$ of partial orders ($\mtcl P_\k$ will be shown to witness Theorem \ref{mainthm}).
In Section \ref{the proof} we give the basic analysis of our construction; in particular, we prove that it has the $\al_2$--chain condition, its properness, that $\mtcl P_\a$ embeds completely in $\mtcl P_\b$ if $\a<\b\leq\k$,
and that $\mtcl P_\k$ forces $2^{\al_0}=\k$ (Lemmas \ref{cc}, \ref{horribilis}, \ref{compll} and \ref{ch}). These results, together with the final lemma (Lemma \ref{measuring}), establish Theorem \ref{mainthm}.

For the most part our notation follows set--theoretic standards as set forth for example in \cite{JECH} and in \cite{KUNEN}.
If $N$ is a set whose intersection with $\omega_1$ is an ordinal, then $\delta_N$ will denote this intersection.
If $N$ is a set, $\mtbb P$ is a partial order and $G$ is a ($V$--generic) filter of $\mtbb P$, $N[G]$ will denote $\{\tau_G\,:\,\tau\in N,\,\tau\textrm{ a $\mtbb P$--name}\}$, where $\tau_G$ denotes the interpretation of $\tau$ by $G$. Also, $G$ is $N$--generic if $G\cap A\cap N\neq\emptyset$ for every maximal antichain $A$ of $\mtbb P$ belonging to $N$. A condition $p$ in $\mtbb P$ is $(N,\mtbb P)$ generic if $p$ forces that $\dot{G}$ is $N$--generic in the above sense. Note that we are \emph{not} assuming that $\mtbb P$ is in $N$ in any of these two sentences.

If $T$ is a predicate (i.e., a subset) of some $H(\t)$ and $\mtcl N=\la N, T\cap N\ra$ is a substructure of $\la H(\t), T\ra$, we also denote $\mtcl N$ by $\la N, T\ra$.\footnote{For example, $\la N, \in\ra$ will denote the structure $\la N, \in\cap\, N\times N\ra$.}
Sets $N$ will often be identified with the structure $\la N, \in\ra$. Recall that the elementary diagram of a structure $\la N, T\ra$ is the collection of sentences with parameters holding in $\la N, T\ra$.

Finally, we will consider the following natural notion of rank: Given two sets $\mtcl N$, $N$, we define the \emph{Cantor--Bendixson rank of $N$ with respect to $\mtcl N$},
$rank(\mtcl N, N)$, by specifying that $rank(\mtcl N, N)\geq 1$ if and only if for every $a\in N$ there is some $M\in\mtcl N\cap N$ such that $a\in M$ and, for each ordinal
$\m\geq 1$, that $rank(\mtcl N, N)>\m$ if and only if for every $a\in N$ there is some $M\in\mtcl N\cap N$ such that $a\in M$ and $rank(\mtcl N, M)\geq\m$.\footnote{Note that if $X$ is a set of ordinals and $\d$ is an ordinal, then $rank(X, \d)\geq 1$ if and only if $\d$ is a limit point of ordinals in $X$ and, for each ordinal
$\m\geq 1$, $rank(X, \d)>\m$ if and only if $\d$ is a limit of ordinals $\e$ with $rank(X, \e)\geq\m$.}

\section{Proving Theorem \ref{mainthm}: The construction}\label{the construction}

The proof of Theorem \ref{mainthm} will be given in a sequence of lemmas in this and the next section.

Let $\k\geq \o_2$ be a cardinal such that $2^{<\k}=\k$ and $\k^{\al_1}=\k$, and let $\Phi:\k\into H(\k)$ be a surjection such that for every $x$ in
$H(\kappa)$, $\Phi^{-1}(\{x\})$ is unbounded
in $\k$.

As anticipated in the introduction, $\mtcl P$ will be the final member $\mtcl P_{\k}$ of a certain sequence $\la\mtcl P_\a\,:\, \a \leq \k\ra$ of forcing notions.
Together with $\la\mtcl P_\a\,:\,\a\leq\k\ra$ we will also define a sequence $\la T^\a\,:\,\a<\k\ra$ of subsets of $H(\k)$\footnote{We will see the $T^\a$'s as truth predicates.} and a corresponding sequence $\la\mtcl M^\a\,:\,\a<\k\ra$ of clubs of $[H(\k)]^{\al_0}$.

\subsubsection{Symmetric systems of structures}

One central notion in our construction will be that of `symmetric system of structures'. We start by defining what we mean by this.

\begin{definition}\label{symm_system}
Let $\chi$ be an uncountable cardinal,  $\vec P$ a sequence $(P^\x)_{\x<\b}$ of subsets of $H(\chi)$ (for some ordinal $\b$), $\mtcl M$ a club of $[H(\chi)]^{\al_0}$, and $\mtcl N$ a finite set. We will say that \emph{$\mtcl N$ is a $\vec P$--symmetric $\mtcl M$--system} if the following conditions hold.

\begin{itemize}

\it[$(\a)$]  $\mtcl N \subseteq \mtcl M$.

\it[$(\b)$]  For all $N$, $N'\in\mtcl N$ and all $\x\in N\cap\b$, if $\d_N=\d_{N'}$ and $\x':=\Psi_{N, N'}(\x)<\b$, then there is a unique isomorphism $\Psi_{N, N'}$ between $\la N, P^{\x}\ra$ and $\la N', P^{\x'}\ra$.

\noindent Furthermore, we ask that $\Psi_{N, N'}$ be the identity on $\chi\cap N\cap N'$.

\it[$(\g)$] For all $N_0$, $N_1$ in $\mtcl N$, if $\d_{N_0}<\d_{N_1}$, then there is some $N_2\in\mtcl N$ such that $\d_{N_2}=\d_{N_1}$ and $N_0\in N_2$.

\it[$(\d)$] For all $N_0$, $N_1$, $N_2$ in $\mtcl N$, if $N_0\in N_1$ and $\d_{N_1}=\d_{N_2}$, then $\Psi_{N_1, N_2}(N_0)\in\mtcl N$.

\end{itemize}
\end{definition}

We may omit mentioning suitable parameters $\vec P$ and $\mtcl M$ when they are not relevant.
If $H(\chi)$ is understood or irrelevant we call $\mtcl N$ a \emph{symmetric system (of structures)}.

Throughout this paper, if $N$ and $N'$ are such that there is a unique isomorphism from $N$ into $N'$, then we denote this isomorphism by $\Psi_{N, N'}$.
The following facts are easy consequences from the above definition.

\begin{fact}\label{fact0} Let $\chi$, $\vec P = (P^\x)_{\x<\b}$ and $\mtcl M$ be as in Definition \ref{symm_system}. Let $\mtcl N$ be a $\vec P$--symmetric $\mtcl M$--system, let $N$, $N'\in\mtcl N$ be such that $\d_N=\d_{N'}$, and let $\x$, $\x'$ be ordinals in $\b$ such that $\x\in N$ and $\Psi_{N, N'}(\x)=\x'$. Then $\Psi_{N, N'}$ is an isomorphism between the structures $\la N, \in, P^\x, \mtcl N\cap N\ra$ and $\la N', \in, P^{\x'}, \mtcl N\cap N'\ra$.
\end{fact}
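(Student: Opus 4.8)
The plan is to prove Fact \ref{fact0} by unpacking the definition of a $\vec P$--symmetric $\mtcl M$--system and verifying that the isomorphism $\Psi_{N, N'}$ guaranteed by clause $(\b)$ already respects the two additional predicates $\in$ and $\mtcl N\cap N$. Concretely, by clause $(\b)$, since $\d_N = \d_{N'}$ and $\x\in N\cap\b$ with $\x' = \Psi_{N, N'}(\x)<\b$, there is a unique isomorphism $\Psi_{N, N'}\colon\la N, P^\x\ra\to\la N', P^{\x'}\ra$; in particular $\Psi_{N,N'}$ is already an $\in$--isomorphism (being an isomorphism of the structures $\la N,\in,P^\x\ra$ and $\la N',\in,P^{\x'}\ra$, where membership is part of the ambient language on these transitive-like structures) that preserves the unary predicate $P$. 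So the only genuine content is that $\Psi_{N, N'}$ carries the predicate $\mtcl N\cap N$ onto $\mtcl N\cap N'$.

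First I would fix an arbitrary $N_0\in\mtcl N\cap N$, i.e. $N_0\in\mtcl N$ with $N_0\in N$, and show $\Psi_{N, N'}(N_0)\in\mtcl N\cap N'$. That $\Psi_{N, N'}(N_0)\in N'$ is immediate since $\Psi_{N, N'}$ is a bijection from $N$ onto $N'$. For membership in $\mtcl N$, I would invoke clause $(\d)$ directly: we have $N_0\in N$, the structures $N=N$ and $N'$ satisfy $\d_N = \d_{N'}$, and clause $(\d)$ (applied with $N_1 = N$, $N_2 = N'$) yields precisely $\Psi_{N, N'}(N_0)\in\mtcl N$. Hence $\Psi_{N, N'}(N_0)\in\mtcl N\cap N'$, so $\Psi_{N, N'}$ maps $\mtcl N\cap N$ into $\mtcl N\cap N'$.

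For surjectivity onto $\mtcl N\cap N'$, I would run the symmetric argument using the inverse isomorphism. By uniqueness of the isomorphisms, $\Psi_{N, N'}^{-1} = \Psi_{N', N}$, which is itself the unique isomorphism between $\la N', P^{\x'}\ra$ and $\la N, P^\x\ra$ provided by clause $(\b)$ (note $\d_{N'} = \d_N$ and $\Psi_{N', N}(\x') = \x < \b$). Applying clause $(\d)$ again, now with the roles of $N$ and $N'$ exchanged, any $N_1\in\mtcl N\cap N'$ satisfies $\Psi_{N', N}(N_1)\in\mtcl N\cap N$, and $\Psi_{N, N'}(\Psi_{N', N}(N_1)) = N_1$. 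Therefore $\Psi_{N, N'}$ restricts to a bijection between $\mtcl N\cap N$ and $\mtcl N\cap N'$, which together with the already-noted preservation of $\in$ and $P$ shows that $\Psi_{N, N'}$ is an isomorphism between $\la N,\in, P^\x,\mtcl N\cap N\ra$ and $\la N',\in, P^{\x'},\mtcl N\cap N'\ra$, as desired.

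There is no serious obstacle here; the statement is essentially a repackaging of clauses $(\b)$ and $(\d)$. The only point requiring mild care is the legitimacy of treating the unary predicate symbol as naming $\mtcl N\cap N$ (respectively $\mtcl N\cap N'$) inside the structure on $N$ (respectively $N'$), and confirming that the uniqueness clause in $(\b)$ forces $\Psi_{N', N}$ to be the inverse of $\Psi_{N, N'}$ so that the surjectivity argument goes through cleanly.
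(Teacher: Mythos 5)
Your argument is correct and is exactly the routine verification the paper has in mind: the paper offers no written proof of Fact \ref{fact0} (it is listed among the ``easy consequences'' of Definition \ref{symm_system}), and the intended justification is precisely your combination of clause $(\b)$ (which gives the unique $\in$-- and $P^\x$--to--$P^{\x'}$--preserving isomorphism) with clause $(\d)$ applied in both directions (using that $\Psi_{N',N}=\Psi_{N,N'}^{-1}$ by uniqueness of the $\in$--isomorphism between extensional countable structures) to see that $\mtcl N\cap N$ is carried bijectively onto $\mtcl N\cap N'$.
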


\begin{fact}\label{fact2}
Let  $\vec P=(P^\x)_{\x<\b}$ and $\mtcl M$ be as in Definition \ref{symm_system}. Let $\mtcl N_0=\{N^0_i\,:\,i<m\}$ and $\mtcl N_1=\{N^1_i\,:\,i<m\}$ be $\vec P$--symmetric $\mtcl M$--systems of structures.
Suppose that $(\bigcup\mtcl N_0)\cap(\bigcup\mtcl N_1)=X$, that there is an isomorphism $$\Psi:\la \bigcup_{i<m}N^0_1, \in, X\ra\into\la \bigcup_{i<m}N^1_i, \in, X\ra$$ fixing $X$, and that for all $\x\in\b\cap \bigcup_{i<m}N^0_i$, if $\x'=\Psi(\x)\in\b$, then $\la \bigcup_{i<m}N^0_i,\in, P^\x, X, N^0_i\ra_{i<m}$ and $\la \bigcup_{i<m}N^1_i,\in, P^{\x'}, X, N^1_{i}\ra_{i<m}$ are isomorphic structures. Then $\mtcl N_0\cup\mtcl N_1$ is a $\vec P$--symmetric $\mtcl M$--system of structures.
\end{fact}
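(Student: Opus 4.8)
The plan is to check directly that $\mtcl N_0\cup\mtcl N_1$ satisfies the four clauses $(\a)$--$(\d)$ of Definition \ref{symm_system}. Clause $(\a)$ is immediate from $\mtcl N_0,\mtcl N_1\sub\mtcl M$. For each of $(\b)$, $(\g)$, $(\d)$, any instance in which all the structures involved lie in a single one of $\mtcl N_0$, $\mtcl N_1$ is already taken care of, since each of these is a $\vec P$--symmetric $\mtcl M$--system by hypothesis. Hence the entire content lies in the \emph{mixed} instances, involving structures from both systems, and the only bridge between the two systems is the given isomorphism $\Psi$. So I would begin by recording the three properties of $\Psi$ that the argument rests on.

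First, $\Psi$ preserves the $\d$'s: $\d_{N^0_i}=\d_{N^1_i}$ for every $i<m$. This is because $\o_1$, being an element of every member of $\mtcl M$, lies in $X$ and is therefore fixed by $\Psi$; consequently $\Psi$ maps $N^0_i\cap\o_1$ order--isomorphically onto $N^1_i\cap\o_1$, and two order--isomorphic ordinals are equal. Second, $\Psi$ maps every member of $\mtcl N_0$ that lies in $\bigcup_{i<m}N^0_i$ to a member of $\mtcl N_1$; indeed, from the named predicates $N^0_i$ the set $\{y:\bigvee_{i<m}y=N^0_i\}$ is first--order definable in the expanded structure, so $\Psi$, being an isomorphism of the expanded structures, carries it to the corresponding definable set for $\mtcl N_1$. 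Third, for each $i<m$ the restriction $\Psi\restr N^0_i$ is an isomorphism of $\la N^0_i,\in\ra$ onto $\la N^1_i,\in\ra$ that carries $P^\x$ to $P^{\Psi(\x)}$ for $\x\in N^0_i\cap\b$ with $\Psi(\x)<\b$, and that fixes $\chi\cap N^0_i\cap N^1_i\sub X$ pointwise.

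With these in hand the mixed cases go as follows. For clause $(\b)$, given $N^0_i$ and $N^1_j$ with $\d_{N^0_i}=\d_{N^1_j}$, the first property gives $\d_{N^1_i}=\d_{N^1_j}$, so the map $\Psi_{N^1_i,N^1_j}$ furnished by $\mtcl N_1$ exists; I would then set $\Psi_{N^0_i,N^1_j}:=\Psi_{N^1_i,N^1_j}\circ(\Psi\restr N^0_i)$ and verify, by composing the two $P$--matchings, that it is an isomorphism of $\la N^0_i,P^\x\ra$ onto $\la N^1_j,P^{\x''}\ra$ with $\x''=\Psi_{N^0_i,N^1_j}(\x)$. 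That it fixes $\chi\cap N^0_i\cap N^1_j$ is checked stepwise: such an ordinal $\g$ lies in $X$, hence is fixed by $\Psi\restr N^0_i$, and then lies in $\chi\cap N^1_i\cap N^1_j$ (it is now seen to be in $N^1_i$), so it is fixed by $\Psi_{N^1_i,N^1_j}$ as well; uniqueness then follows from the uniqueness clause in $(\b)$ for the structures involved. For clause $(\g)$, given $N_0\in\mtcl N_0$ and $N^1_j\in\mtcl N_1$ with $\d_{N_0}<\d_{N^1_j}$, the first property yields $\d_{N_0}<\d_{N^0_j}$, so applying $(\g)$ \emph{inside} $\mtcl N_0$ to $N_0$ and $N^0_j$ produces $N_2\in\mtcl N_0$ with $\d_{N_2}=\d_{N^0_j}=\d_{N^1_j}$ and $N_0\in N_2$; this $N_2$ already witnesses $(\g)$ in the combined system, and the symmetric subcase is handled inside $\mtcl N_1$ using $\Psi^{-1}$. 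For clause $(\d)$ the interesting case is $N_1=N^0_a\in\mtcl N_0$, $N_2=N^1_b\in\mtcl N_1$; writing $\Psi_{N^0_a,N^1_b}$ as the composite above and applying it to $N_0\in N^0_a$, the restriction $\Psi\restr N^0_a$ sends $N_0$ into $N^1_a$ and, by the second property, into $\mtcl N_1$, so that $\Psi(N_0)\in\mtcl N_1\cap N^1_a$; clause $(\d)$ for $\mtcl N_1$ then places $\Psi_{N^1_a,N^1_b}(\Psi(N_0))$ in $\mtcl N_1\sub\mtcl N_0\cup\mtcl N_1$.

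The step I expect to demand the most care is clause $(\b)$ in the mixed case, namely confirming that the composite $\Psi_{N^1_i,N^1_j}\circ(\Psi\restr N^0_i)$ really has \emph{all} the features demanded by Definition \ref{symm_system}$(\b)$ --- that it matches the $P$--predicates coherently, fixes exactly the common ordinals, and is the unique such isomorphism --- since this is where the global map $\Psi$ and the local maps $\Psi_{N^1_i,N^1_j}$ must be shown to cohere. The two structural properties of $\Psi$ isolated above (preservation of the $\d$'s and of system--membership) are precisely what make these composites well defined and keep the images inside $\mtcl N_0\cup\mtcl N_1$; once they are established, all the remaining verifications reduce to the corresponding single--system clauses for $\mtcl N_0$ and $\mtcl N_1$.
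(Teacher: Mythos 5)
Your proposal is correct and follows essentially the same route as the paper: the paper also treats this as a routine clause-by-clause verification whose only nontrivial content is in the mixed instances, and its one worked detail --- that the cross-system isomorphisms fix the common ordinals, obtained by factoring $\Psi_{N^0_{i_0},N^1_{i_1}}$ through the global map $\Psi$ and a within-system isomorphism --- is exactly the computation you carry out (you factor as $\Psi_{N^1_i,N^1_j}\circ(\Psi\restr N^0_i)$ rather than $(\Psi\restr N^0_{i_1})\circ\Psi_{N^0_{i_0},N^0_{i_1}}$, which agrees by uniqueness). Your write-up is in fact somewhat more systematic than the paper's, which leaves the remaining clauses entirely to the reader.
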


\begin{proof}
The proof is a routine verification.
Let us show for example that if $i_0$, $i_1< m$ are such that $\d_{N^0_{i_0}}=\d_{N^1_{i_1}}$, then $\Psi_{N^0_{i_0}, N^1_{i_1}}$ fixes $Ord\cap N^0_{i_0}\cap N^1_{i_1}$: Let $\Psi$ be the isomorphism between $\la\bigcup_{i<m}N^0_i,\in, X, N^0_i\ra_{i<m}$ and $\la \bigcup_{i<m}N^1_i,\in, X, N^1_i\ra_{i<m}$. If $\g\in Ord\cap N^0_{i_0}\cap N^1_{i_1}$, then $\g\in X\cap N^0_{i_0}$, which implies that $\Psi(\g)=\g\in N^1_{i_0}\cap N^1_{i_1}$. But then $\g\in N^0_{i_0}\cap N^0_{i_1}$ as $\Psi$ is an isomorphism between the structures $\la\bigcup_{i<m}N^0_i,\in, X, N^0_i\ra_{i<m}$ and $\la \bigcup_{i<m}N^1_i,\in, X, N^1_i\ra_{i<m}$, which implies that $\Psi_{N^0_{i_0}, N^0_{i_1}}(\g)=\g$ and hence that $(\Psi\restr N^0_{i_1}\circ\Psi_{N^0_{i_0}, N^0_{i_1}})(\g) = \Psi_{N^0_{i_0}, N^1_{i_1}}(\g)=\g$.
\end{proof}

\subsection{The definition of $\la\mtcl P_\a\,:\,\a\leq\k\ra$}\label{The definition}
Now we proceed to the definition of $\la\mtcl P_\a\,:\,\a\leq \k\ra$ and $\la T^\a\,:\,\a<\k\ra$.

For every $\a< \k$, $\mtcl M^\a$ will be the set of $N\in [H(\k)]^{\al_0}$ such that $\la N, T^\a\ra \elsub\la H(\k), T^\a\ra$. We let $T^0\sub H(\k)$ code $\Phi$ together with the restriction of $\in$ to $H(\k)$. For every nonzero $\b < \k$, if $T^\a$ has been defined for all $\a<\b$, we let $\vec T^\b =(T^\a)_{\a<\b}$ and let
$T^\b\sub H(\k)$ code the elementary diagram of $\la H(\k), T^\a\ra_{\a<\b}$.\footnote{We will actually make use of $T^\b$ only for successor $\b$.} As we will see, each $\mtcl P_\a$ will be lightface definable in the structure $\la H(\k), T^{\a+1}\ra$. We let also $\mtcl M^\b_\ast = \{N\in [H(\k)]^{\al_0}\,:\,N\in\bigcap_{\x\in N\cap\b}\mtcl M^\x\}$.

We start with the definition of $\mtcl P_0$: A condition in $\mtcl P_0$ will be a pair $(\emptyset, \D)$, where

\begin{itemize}

\it[$(A1)$] $\D$ is a countable set of pairs of the form $(N, 0)$.

\it[$(A2)$] $dom(\D)$ is a $T^0$--symmetric $\mtcl M^0$--system of countable substructures of $H(\k)$.

\end{itemize}

Given $\mtcl P_0$--conditions $q_\e=(\emptyset, \D_\e)$ for $\e\in\{0, 1\}$, $q_1$ extends $q_0$ if and only if \begin{itemize} \it[$(B)$] $\D_0\sub\D_1$
\end{itemize}

In the definition of $\mtcl P_0$--condition we have used $0$ twice in a completely vacuous way. These (vacuous)
$0$'s are there to ensure that the (uniformly defined) operation of restricting a condition in a
(further) $\mtcl P_\b$ to an ordinal $\a<\b$ yields a condition in $\mtcl P_0$ when applied to any condition in any $\mtcl P_\b$ and to $\a=0$.

Given $\a\leq\k$ for which $\mtcl P_\a$ has been defined, let $\dot G_\a$ be a canonical $\mtcl P_\a$--name for the generic object and let $\mtcl N_{\dot G_\a}$ be a canonical $\mtcl P_\a$--name for the set $\bigcup\{\D_r^{-1}(\a)\,:\,r\in\dot G_\a\}$.\footnote{As we will see, conditions in $\mtcl P_\a$ will be pairs $r=(s, \D_r)$, with $\D_r$ a set of pairs $(N, \g)$, $\g$ an ordinal.}

Let $\b<\k$, $\b\neq 0$, and suppose that for all $\a<\b$

\begin{itemize}

\it[($\circ$)] we have defined $T^\a$ and $\mtcl P_\a$, and

\it[($\circ$)] $\mtcl P_\a\sub H(\k)$ is a partial order with the $\al_2$--chain condition consisting of pairs $r=(s, \D_r)$,
with $\D_r$ a set of pairs of the form $(N, \g)$, with $\g$ an ordinal.

\end{itemize}
\noindent The $\al_2$--c.c. of $\mtcl P_\a$ will follow from Lemma \ref{cc}.

We will use  a certain poset in $V^{\mtcl P_\a}$ for measuring a given club--sequence.

\subsubsection{A forcing notion for measuring a club--sequence}

Suppose $\dot{\mtcl C}$ is a $\mtcl P_\a$--name for a club--sequence $(C_\d)_{\d<\o_1}$. We define next a forcing $\T_{\dot{\mtcl C}}$, in $V^{\mtcl P_\a}$, for adding a club measuring $\dot{\mtcl C}$:

Conditions in $\T_{\dot{\mtcl C}}$ are triples $(f, b, \mtcl O)$ with the following properties.

\begin{itemize}

\it[(1)] $\mtcl O\sub\mtcl N_{\dot G_\a}$ is a $\vec T^{\a+2}$--symmetric $\mtcl M^{\a+2}_\ast$--system of structures.

\it[(2)] $f$ is a finite function that can be extended to a normal function $F:\o_1\into\o_1$ such that $F(\omega)=\omega$. Moreover, for every $\n\in dom(f)\setminus(\omega+1)$,

\begin{itemize}

\it[(2.1)] $f(\n)\in\{\d_N\,:\,N \in \mtcl O\}$, and
\it[(2.2)] $rank(\mtcl N_{\dot G_\a}\cap\mtcl M^{\a+2}_\ast, N)\geq\n$ for every $N\in \mtcl O$ such that $f(\n)=\d_N$. \end{itemize}

\it[(3)] $b$ is a function with domain included in $dom(f)\setminus(\omega+1)$. Moreover, the following holds for all $\n\in dom(b)$:

\begin{itemize}

\it[(3.1)] $b(\n)< \n$ and $b(\n)+1\in dom(f)$.

\it[(3.2)] If $\n_0\in dom(f)$ is such that $b(\n)< \n_0 < \n$, then $f(\n_0)\notin C_{f(\n)}$. Furthermore, if $\n_1\in dom(f)$ is such that $\n_0 + 1 < \n_1 <\n$, then
$[f(\n_0),\, f(\n_1)]\cap C_{f(\n)} = \emptyset$.

\it[(3.3)]  $rank(\{M \in \mtcl N_{\dot G_\a} \cap \mtcl M^{\a+2}_\ast\,:\,\d_M\notin C_{f(\n)}\}, N)\geq\n$ for every $N\in \mtcl O$ such that $f(\n)=\d_N$.

\end{itemize}

\end{itemize}

Given $\T_{\dot{\mtcl C}}$--conditions $(f_0, b_0, \mtcl O_0)$ and $(f_1, b_1, \mtcl O_1)$, $(f_1, b_1, \mtcl O_1)$ extends $(f_0, b_0, \mtcl O_0)$ in case $f_0\sub f_1$, $b_0\sub b_1$, and $\mtcl O_0\sub\mtcl O_1$.

The forcing $\T_{\dot{\mtcl C}}$ is meant to add a club $C$ measuring $\dot{\mtcl C}$. This club is the range of the union of all functions $f$ coming from conditions in the generic filter. The fact that this function is continuous and has domain $\o_1$ is ensured essentially by condition (2.2) in our definition. The function $b$ represents the commitment to avoid a certain member $C_\d$ of $\dot{\mtcl C}$ on a tail of $C\cap\d$ for every $\d$ in its domain. This, together with the conditions that must hold in case we make this commitment, is expressed in condition (3). By density, every $\n$ will eventually be in the domain of a relevant $b$ -- in other words, we will promise that $C$ avoids a tail of $f(\n)$ -- unless we cannot keep that promise. If we cannot keep that promise at a given $\n$, then a density argument using essentially condition (2) and the symmetry of $\mtcl O$ will show that a tail of $C\cap f(\n)$ will automatically go into $C_{f(\n)}$ (see the proof of Lemma \ref{measuring}).

The following lemma is immediate.

\begin{lemma}\label{l00}
Suppose $\dot{\mtcl C}$ is a $\mtcl P_\a$--name for a club--sequence.
If $(f, b, \mtcl O_0)$ and $(f, b, \mtcl O_1)$ are conditions in $\T_{\dot{\mtcl C}}$ and $\mtcl O_0\cup\mtcl O_1$ is a $\vec T^{\a+2}$--symmetric $\mtcl M^{\a+2}_\ast$--system,
then $(f, b, \mtcl O_0\cup\mtcl O_1)$ is a condition in $\T_{\dot{\mtcl C}}$ stronger than both $(f, b, \mtcl O_0)$ and $(f, b, \mtcl O_1)$.
\end{lemma}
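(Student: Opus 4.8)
The plan is to check directly that the triple $(f, b, \mtcl O_0\cup\mtcl O_1)$ meets each of the requirements (1)--(3) in the definition of $\T_{\dot{\mtcl C}}$, and then to read off the extension claim from the ordering. The guiding observation, which makes everything routine, is that the first two coordinates $f$ and $b$ are shared by the two given conditions; hence every clause that refers only to $f$, to $b$, or to the fixed name $\dot{\mtcl C}$ is inherited verbatim from, say, $(f, b, \mtcl O_0)$, and only the clauses mentioning the third coordinate require any attention.

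For clause (1) there is nothing to do: we are assuming as a hypothesis that $\mtcl O_0\cup\mtcl O_1$ is a $\vec T^{\a+2}$--symmetric $\mtcl M^{\a+2}_\ast$--system, and $\mtcl O_0\cup\mtcl O_1\sub\mtcl N_{\dot G_\a}$ since both $\mtcl O_0$ and $\mtcl O_1$ are. For clause (2), the requirement that $f$ extend to a normal $F$ with $F(\o)=\o$ speaks only of $f$ and is inherited. Clause (2.1) follows from the inclusion $\{\d_N\,:\,N\in\mtcl O_0\}\sub\{\d_N\,:\,N\in\mtcl O_0\cup\mtcl O_1\}$. The point for (2.2) is that it is a \emph{pointwise} condition, asserting an inequality for \emph{every} $N$ in the third coordinate with $f(\n)=\d_N$, while the object $\mtcl N_{\dot G_\a}\cap\mtcl M^{\a+2}_\ast$ against which the rank is computed does not involve the third coordinate at all. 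Thus any witness $N\in\mtcl O_0\cup\mtcl O_1$ already lies in $\mtcl O_0$ or in $\mtcl O_1$, and the required $rank(\mtcl N_{\dot G_\a}\cap\mtcl M^{\a+2}_\ast, N)\geq\n$ was verified in whichever original condition contains it. Clause (3) is handled identically: (3.1) and (3.2) mention only $f$, $b$, and $\dot{\mtcl C}$ and so are inherited, while (3.3) is again pointwise, its reference set $\{M\in\mtcl N_{\dot G_\a}\cap\mtcl M^{\a+2}_\ast\,:\,\d_M\notin C_{f(\n)}\}$ being independent of the third coordinate, so that each relevant $N$ in the union carries its already--checked witness.

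The extension assertion is then immediate from the definition of the ordering on $\T_{\dot{\mtcl C}}$: we have $f\sub f$, $b\sub b$, and $\mtcl O_\e\sub\mtcl O_0\cup\mtcl O_1$ for $\e\in\{0,1\}$. I do not expect any genuine obstacle here -- this is exactly why the lemma is flagged as immediate. The only thing one must not overlook is the special status of the symmetry clause (1): it is the single genuinely nontrivial closure property of the third coordinate, and in this lemma it is supplied to us as a hypothesis rather than something we must establish, which is precisely what lets the remaining, purely pointwise verifications go through without work.
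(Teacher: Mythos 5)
Your verification is correct and is exactly the routine clause-by-clause check the paper has in mind when it states the lemma without proof ("The following lemma is immediate"): the symmetry of the union is supplied as a hypothesis, and every remaining clause of the definition of $\T_{\dot{\mtcl C}}$ either depends only on $f$, $b$ and $\dot{\mtcl C}$ or is a pointwise requirement on members of the third coordinate whose witnessing set does not involve that coordinate. Nothing is missing.
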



\subsubsection{Resuming the construction}

We are now in a position to define $\mtcl P_\b$ in general for any $\b>0$, $\b\leq\k$ (assuming $\mtcl P_\a$ defined for all $\a<\b$).

If $\alpha < \kappa$ and $\mtcl P_\a$ is defined, we let $\Phi^\ast(\a)$ be a $\mtcl P_\a$--name for (say) the sequence $(\d)_{\d<\o_1}$ if $\Phi(\a)$ is not a $\mtcl P_\a$--name for a club--sequence, and let $\Phi^\ast(\a)$ be $\Phi(\a)$ if $\Phi(\a)$ is a $\mtcl P_\a$--name for a club--sequence.\footnote{It will follow from our definition that, for all $\a<\b$, a $\mtcl P_\a$--name is also a $\mtcl P_\b$--name (literally).}

Assume first that $\beta < \kappa$. Conditions in $\mtcl P_\b$ are pairs of the form $q= (p, \D)$ with the following properties.

\begin{itemize}

\it[$(C0)$] $p$ is a finite function such that $dom(p)\sub\b$ and $\D$ is a set of pairs $(N, \g)$ with $\g\leq\b$.

\it[$(C1)$]  $\D^{-1}(\b)$ is a $\vec{T}^{\b+1}$--symmetric $\mtcl M^{\b+1}_\ast$--system.


\it[$(C2)$] For every $\a<\b$, the restriction of $q$ to $\a$ is a condition in $\mtcl P_\a$. This restriction is defined
as $$q\av_\a:=(p\restr\a, \{(N, min\{\a, \g\})\,:\,(N, \g)\in \D\})$$

\it[$(C3)$] If $\a\in dom(p)$, then $p(\a)$ is of the form $\la f^{p, \a}, b^{p, \a}, \mtcl O^{p, \a}\ra$ and

\begin{itemize}

 \it[$(C3.1)$] $\D_{q\av_{\a+1}}^{-1}(\a+1)\cap\mtcl M^{\a+2}_\ast  \sub \mtcl O^{p, \a}\sub\D_{q\av_\a}^{-1}(\a)$,       \it[$(C3.2)$] $\mtcl P_\a\restr q\av_\a$ forces that $\la f^{p, \a}, b^{p, \a}, \mtcl O^{p, \a}\ra$ is a
$\Theta_{\Phi^\ast(\a)}$--condition, and \it[$(C3.3)$]
for all $N\in\D_{q\av_{\a+1}}^{-1}(\a+1)$, if $\a\in N$, then $\d_N\in dom(f^{p, \a})$ and $f^{p, \a}(\d_N)=\d_N$.
\end{itemize}

\end{itemize}

Given conditions $$q_\e =(p_\e, \,\D_\e)$$ (for $\e\in\{0, 1\}$) in $\mtcl P_\b$, we will say that $q_1 \leq_\b q_0$ if and only if the following holds.

\begin{itemize}

\it[$(D1)$] $q_1\av_\a \leq_\a q_0\av_\a$ for all $\a<\b$,

\it[$(D2)$] $dom(p_0)\sub dom(p_1)$

\it[$(D3)$] $f^{p_0, \a}\sub f^{p_1, \a}$, $b^{p_0, \a}\sub b^{p_1, \a}$ and $\mtcl O^{p_0, \a}\sub\mtcl O^{p_1, \a}$ for all $\a\in dom(p_0)$.\footnote{It follows of course that $q_1\av_\a$ forces that $(f^{p_1, \a}, b^{p_1, \a}, \mtcl O^{p_1, \a})$ extends $(f^{p_0, \a}, b^{p_0, \a}, \mtcl O^{p_0, \a})$ in $\T_{\Phi^\ast(\a)}$ whenever $\a\in dom(p_0)$.}

\it[$(D4)$] $\D_0^{-1}(\b)\sub \D_1^{-1}(\b)$.

\end{itemize}

\vspace{.3cm}
Note that if $\beta < \k$ is a nonzero limit ordinal and $q= (p, \D)$ satisfies  condition $(C0)$, then $q \in\mtcl P_\b$ iff $q$ satisfies $(C1)$ and $(C2)$.
Note also that for all $\b<\k$, $\mtcl P_\b$ is definable in $\la H(\k), T^{\b+1}\ra$.

We will use the following easy lemma.

\begin{lemma}\label{lemma0} For all $\b<\k$ and all $R\sub H(\k)$, if $M$ is such that $\la M, T^{\b+1}, R\ra\elsub\la H(\k), T^{\b+1}, R\ra$, then $\mtcl P_\b$ forces $\la M[\dot G_\b], \dot G_\b, R\ra\elsub\la H(\k)^{V[\dot G_\b]}, \dot G_\b, R\ra$.\end{lemma}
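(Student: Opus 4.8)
The plan is to verify the conclusion by the Tarski--Vaught criterion, exploiting the two definability facts recorded just before the statement: $\mtcl P_\b$ is (lightface) definable over $\la H(\k), T^{\b+1}\ra$, and hence, by the usual definability of the forcing relation, for each fixed formula the relation ``$q\Vdash\theta$'' (for $\theta$ in the language with the extra predicates $\dot G_\b$ and $\check R$) is definable over $\la H(\k), T^{\b+1}, R\ra$. Since $\mtcl P_\b\sub H(\k)$ has the $\al_2$--chain condition and $|\mtcl P_\b|\leq\k$, forcing with $\mtcl P_\b$ preserves $\k$ and every name in $M$ is evaluated inside $H(\k)^{V[\dot G_\b]}$, so first I would check the routine inclusion $M[\dot G_\b]\sub H(\k)^{V[\dot G_\b]}$; elementarity is then the real content.

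Working in $V$, fix a generic $G$ for $\mtcl P_\b$. Let $\varphi(x, \bar y)$ be a formula in the language with predicate symbols for $\dot G_\b$ and $R$, let $\bar\tau\in M$ be a tuple of $\mtcl P_\b$--names, and suppose $H(\k)^{V[G]}\models\exists x\,\varphi(x, \bar\tau_G, G, R)$; I must produce a witness in $M[G]$. The key step is to reflect, not a condition, but a single ``universal witness name''. In $V$, using the maximal principle (fullness) inside $H(\k)$ --- take a maximal antichain deciding $\exists x\,\varphi(x, \bar\tau, \dot G_\b, \check R)$, choose by AC a witnessing name below each condition that forces the existential, and mix --- one obtains a $\mtcl P_\b$--name $\dot b\in H(\k)$ with $\Vdash_{\mtcl P_\b}(\exists x\,\varphi(x, \bar\tau, \dot G_\b, \check R)\impl\varphi(\dot b, \bar\tau, \dot G_\b, \check R))$. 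Thus the sentence ``there is a $\mtcl P_\b$--name $\dot b$ with $\Vdash_{\mtcl P_\b}(\exists x\,\varphi\impl\varphi(\dot b))$'' holds in $\la H(\k), T^{\b+1}, R\ra$ with the only parameters being $\bar\tau\in M$.

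Next I would invoke elementarity: by $\la M, T^{\b+1}, R\ra\elsub\la H(\k), T^{\b+1}, R\ra$, together with the definability of both $\mtcl P_\b$ and of $\Vdash_{\mtcl P_\b}$ over this structure (so that ``being a $\mtcl P_\b$--name'' and the forcing relation are computed correctly inside $M$ for parameters in $M$), there is such a name $\dot b\in M$ for which $\Vdash_{\mtcl P_\b}(\exists x\,\varphi(x, \bar\tau, \dot G_\b, \check R)\impl\varphi(\dot b, \bar\tau, \dot G_\b, \check R))$ holds genuinely. Since the antecedent $\exists x\,\varphi(x, \bar\tau_G, G, R)$ is true in $H(\k)^{V[G]}$ by assumption, the forced implication yields $H(\k)^{V[G]}\models\varphi(\dot b_G, \bar\tau_G, G, R)$; and $\dot b\in M$ gives $\dot b_G\in M[G]$. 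This is the desired witness, so Tarski--Vaught applies and $\la M[G], G, R\ra\elsub\la H(\k)^{V[G]}, G, R\ra$.

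The point I expect to require the most care is the definability bookkeeping: checking that the forcing relation for formulas mentioning the predicates $\dot G_\b$ and $\check R$ is genuinely definable over $\la H(\k), T^{\b+1}, R\ra$, and hence absolute between $M$ and $H(\k)$ for parameters in $M$ --- this is what guarantees that the reflected $\dot b$ is a correct universal witness rather than merely one according to $M$. Everything else (the inclusion $M[G]\sub H(\k)^{V[G]}$, the maximal--principle construction of $\dot b$ within $H(\k)$, and the final Tarski--Vaught bookkeeping) is routine. I would stress in particular that no genericity of $G$ over $M$ is needed: reflecting a universal witness name, rather than searching for a condition lying in $G\cap M$, is exactly what makes the argument work for an arbitrary elementary $M$.
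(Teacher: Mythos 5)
Your proof is correct and is the standard argument this lemma calls for; the paper itself offers no proof (it is labelled an ``easy lemma''), and your route --- fullness/mixing inside $H(\k)$ to get a universal witness name, definability of $\mtcl P_\b$ and of its forcing relation over $\la H(\k), T^{\b+1}, R\ra$ to reflect that name into $M$, then Tarski--Vaught --- is exactly the intended one. The only point worth making explicit is the identification $H(\k)^{V[\dot G_\b]}=\{\tau_{\dot G_\b}\,:\,\tau\in H(\k)^V\}$, which you correctly attribute to the $\al_2$--chain condition together with $cf(\k)\geq\o_2$ and $2^{<\k}=\k$, and which is what makes both the mixing step and the surjectivity of evaluation onto $H(\k)^{V[\dot G_\b]}$ go through.
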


Finally we give the definition of the forcing $\mtcl P_\k$. Conditions in $\mtcl P_\k$ are pairs of the form $q= (p, \D)$ with the following properties.

\begin{itemize}

\it[$(E0)$] $p$ is a finite function such that $dom(p)\sub\kappa$ and $\D$ is a set of pairs $(N, \g)$ with $\g <\k$.

\it[$(E1)$] For every $\a<\k$, the restriction of $q$ to $\a$ is a condition in $\mtcl P_\a$. This restriction is defined
as $$q\av_\a:=(p\restr\a, \{(N, min\{\a, \g\})\,:\,(N, \g)\in \D\})$$

\end{itemize}

Given conditions $$q_\e =(p_\e, \,\D_\e)$$ (for $\e\in\{0, 1\}$) in $\mtcl P_\k$, we will say that $q_1 \leq_\k q_0$ if and only if the following holds.

\begin{itemize}

\it[$(F1)$] $q_1\av_\a \leq_\a q_0\av_\a$ for all $\a<\k$.

\end{itemize}

\section{Proving Theorem \ref{mainthm}: The actual proof}\label{the proof}

In this section we prove the main facts about the forcings $\mtcl P_\a$. Theorem \ref{mainthm} will follow immediately from them.

Our first lemma is immediate from the definitions.

\begin{lemma}\label{suborder} $\mtcl P_\k=\bigcup_{\b<\k}\mtcl P_\b$, and $\emptyset\neq\mtcl P_\a\sub \mtcl P_\b$ for all $\a\leq\b\leq\k$.\end{lemma}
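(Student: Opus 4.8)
The plan is to first record that each $\mtcl P_\a$ is nonempty, then prove the inclusions $\mtcl P_\a\sub\mtcl P_\b$ by induction on $\b$, and finally derive the identity $\mtcl P_\k=\bigcup_{\b<\k}\mtcl P_\b$ from these inclusions together with a boundedness observation about individual conditions. Nonemptiness is witnessed uniformly by the trivial condition $(\emptyset,\emptyset)$: its restriction to any $\a$ is again $(\emptyset,\emptyset)$, the clauses $(A2)$, $(C1)$, $(E1)$ hold because $\emptyset$ is (vacuously) a symmetric system, and $(C3)$ is vacuous since the working part has empty domain.

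For the inclusions, fix $\a$ and argue by induction on $\b\geq\a$ that $\mtcl P_\a\sub\mtcl P_\b$. The single observation driving everything is that if $q=(p,\D)\in\mtcl P_\a$ then $dom(p)\sub\a$ and every ordinal occurring in $\D$ is $\leq\a$, so for every $\a'\geq\a$ one has $p\restr\a'=p$ and $\min\{\a',\g\}=\g$ for each $(N,\g)\in\D$; hence $q\av_{\a'}=q$. Granting this, to place $q$ in $\mtcl P_\b$ I verify the defining clauses. Conditions $(C0)$ and $(E0)$ are immediate. For $(C1)$, when $\a<\b$ the set $\D^{-1}(\b)$ is empty (no ordinal $\b$ occurs), and when $\a=\b$ it is the symmetric system already provided by $q\in\mtcl P_\a$. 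For $(C2)$ and $(E1)$ I must check $q\av_{\a'}\in\mtcl P_{\a'}$ for every $\a'<\b$: if $\a'<\a$ this is part of $q$ being a $\mtcl P_\a$--condition, while if $\a\leq\a'<\b$ then $q\av_{\a'}=q$ and $q\in\mtcl P_\a\sub\mtcl P_{\a'}$ by the induction hypothesis. Finally $(C3)$ refers only to data internal to $q$ and to its restrictions below $\a$, so it transfers verbatim from $q\in\mtcl P_\a$. The successor, limit (using the remark that at nonzero limits only $(C0)$--$(C2)$ need checking), and $\b=\k$ cases are all handled by these same computations.

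For the identity, the inclusion $\bigcup_{\b<\k}\mtcl P_\b\sub\mtcl P_\k$ is the instance $\b=\k$ of what was just proved. For the reverse inclusion take $q=(p,\D)\in\mtcl P_\k$. The key point is that $q$ is a bounded object: $p$ is finite, and since $q\av_0\in\mtcl P_0$ forces $dom(\D)$ to be a symmetric system (hence finite by $(A2)$, each structure carrying a single ordinal), $\D$ is a finite side condition; consequently $\b_0:=\sup\big(dom(p)\cup\{\g+1:(N,\g)\in\D\}\big)<\k$ because $\k$ is a limit ordinal. By the boundedness observation above, $q\av_{\b_0}=q$, and $(E1)$ gives $q\av_{\b_0}\in\mtcl P_{\b_0}$, so $q\in\mtcl P_{\b_0}\sub\bigcup_{\b<\k}\mtcl P_\b$.

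The main obstacle I anticipate is precisely this last paragraph: everything hinges on a condition of $\mtcl P_\k$ having finite working part and finite side condition, so that its data all live below some $\b_0<\k$; the finiteness of $\D$ is exactly what the $\mtcl P_0$--clause $(A2)$ secures through $q\av_0$. The rest is bookkeeping — the only thing to be careful about is tracking which defining clauses must be re-verified at each of the successor, limit, and $\b=\k$ stages, all of which collapse to the identity $q\av_{\a'}=q$ for $\a'\geq\a$.
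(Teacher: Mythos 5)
Your proof is correct and is precisely the unwinding of the definitions that the paper treats as immediate (it states this lemma without proof), with the two key observations being that $q\av_{\a'}=q$ whenever $q\in\mtcl P_\a$ and $\a'\geq\a$, and that a $\mtcl P_\k$--condition is a bounded object. The one point worth flagging is that your finiteness claim for $\D$ rests on reading the definition as attaching a single marker $\g$ to each structure $N$ --- this is the intended reading, and in any case the countability imposed via $(A1)$ together with $cf(\k)>\o_1$ (which follows from $\k^{\al_1}=\k$) already suffices to bound the markers below $\k$.
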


Lemma \ref{compll} shows in particular that $\la\mtcl P_\a\,:\,\a\leq\k\ra$ is a forcing iteration in a broad sense.

\begin{lemma}\label{compll}
Let $\a \leq \b \leq \k$. If $q=(p, \D_{q})\in \mtcl P_\a$, $s=(r, \D_{s}) \in \mtcl P_\b$ and $q \leq_\a
s|_\a$, then $(p^{\smallfrown}(r\restr [\a,\, \b)), \D_{q}\cup \D_{s})$ is a condition in $\mtcl P_\b$ extending $s$.
Therefore, any maximal antichain in $\mtcl P_\a$ is a maximal antichain in $\mtcl P_\b$ and $\mtcl P_\a$ is a complete suborder of $\mtcl P_\b$.
\end{lemma}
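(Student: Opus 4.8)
The plan is to prove the first assertion by induction on $\b$ and to read off the ``Therefore'' clause from the standard criterion for complete suborders. Write $t:=(\tld p,\,\D_q\cup\D_s)$ with $\tld p:=p^{\smallfrown}(r\restr[\a,\,\b))$. Since $dom(p)\sub\a$ and $dom(r\restr[\a,\b))\sub[\a,\b)$ are disjoint, $\tld p$ is a finite function with $dom(\tld p)\sub\b$, and every pair in $\D_q\cup\D_s$ carries an ordinal $\leq\b$; this is $(C0)$ (resp.\ $(E0)$ when $\b=\k$). The core computation is a \emph{restriction formula}: for $\a'<\b$ one checks directly from the definition of $t\av_{\a'}$ that, because capping at $\a'$ distributes over unions, $t\av_{\a'}$ equals the gluing $(\tld p\restr\a',\,\D_{q\av_{\min\{\a,\a'\}}}\cup\D_{s\av_{\a'}})$ of $q\av_{\min\{\a,\a'\}}$ and $s\av_{\a'}$, and that $q\av_{\min\{\a,\a'\}}\leq s\av_{\a'}$ holds by the hypothesis $q\leq_\a s\av_\a$ together with $(D1)$. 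Thus the induction hypothesis applies and each $t\av_{\a'}$ is a condition in $\mtcl P_{\a'}$, which is $(C2)$ (resp.\ $(E1)$). For $\b=\k$ this already shows $t\in\mtcl P_\k$.

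For $\b<\k$ it remains to verify $(C1)$ and $(C3)$. For $(C1)$: if $\a<\b$ no pair of $\D_q$ carries an ordinal as large as $\b$, so $(\D_q\cup\D_s)^{-1}(\b)=\D_s^{-1}(\b)$, a symmetric system because $s$ is a condition; if $\a=\b$ then $(D4)$ gives $\D_s^{-1}(\b)\sub\D_q^{-1}(\b)$, so $(\D_q\cup\D_s)^{-1}(\b)=\D_q^{-1}(\b)$, again symmetric. The key to $(C3)$ is an \emph{absorption fact}: for $\a''<\a$, applying $(D1)$ and then $(D4)$ at level $\a''+1\leq\a$ yields $\D_{s\av_{\a''+1}}^{-1}(\a''+1)\sub\D_{q\av_{\a''+1}}^{-1}(\a''+1)$, so the $\D_s$--part is absorbed and $\D_{t\av_{\a''+1}}^{-1}(\a''+1)=\D_{q\av_{\a''+1}}^{-1}(\a''+1)$; dually, for $\a''\geq\a$ no pair of $\D_q$ carries an ordinal exceeding $\a''$, so $\D_{t\av_{\a''+1}}^{-1}(\a''+1)=\D_{s\av_{\a''+1}}^{-1}(\a''+1)$. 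As the triples $\tld p(\a'')$ are inherited verbatim (from $q$ when $\a''<\a$, from $s$ when $\a''\geq\a$), these identities reduce the lower bound of $(C3.1)$ and the hypothesis set of $(C3.3)$ for $t$ to the ones already satisfied by $q$ or $s$, while the upper bound of $(C3.1)$ only grows under $\cup$ and so is preserved.

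For $(C3.2)$ I use downward preservation of forcing: $t\av_{\a''}$ extends the relevant restriction — $q\av_{\a''}$ when $\a''<\a$, via the trivial sub-fact that enlarging a side condition of a condition yields an extension, and $s\av_{\a''}$ when $\a''\geq\a$ directly — so the forced statement ``$\la f^{\tld p,\a''},b^{\tld p,\a''},\mtcl O^{\tld p,\a''}\ra$ is a $\T_{\Phi^\ast(\a'')}$--condition'' survives. Finally $t\leq_\b s$ is read off from $(D1)$--$(D4)$: $(D1)$ is the restriction formula plus induction; $(D2)$--$(D3)$ hold because $dom(r)\cap\a\sub dom(p)$ and the triples of $s$ below $\a$ are extended by those of $q$ (both from $q\leq_\a s\av_\a$), while above $\a$ the triples coincide; and $(D4)$ at level $\b$ is immediate.

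For the ``Therefore'' clause, the order and incompatibility relations of $\mtcl P_\a$ are inherited in $\mtcl P_\b$ (immediate from the definitions and Lemma \ref{suborder}; in particular, for $q\in\mtcl P_\a$ and $u\in\mtcl P_\b$ one has $u\leq_\b q$ iff $u\av_\a\leq_\a q$). Given a maximal antichain $A\sub\mtcl P_\a$ and any $s\in\mtcl P_\b$, pick $q\in A$ compatible with $s\av_\a\in\mtcl P_\a$, say $q'\leq_\a q,\,s\av_\a$; the gluing $u$ of $q'$ and $s$ satisfies $u\leq_\b s$ by the lemma, and $u\av_\a\leq_\a q'\leq_\a q$ by the restriction formula, hence $u\leq_\b q$. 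So $A$ is predense below every $s\in\mtcl P_\b$, i.e.\ $A$ is a maximal antichain of $\mtcl P_\b$, and $\mtcl P_\a$ is a complete suborder of $\mtcl P_\b$. I expect the main obstacle to be the bookkeeping behind $(C3.1)$ and $(C3.3)$: getting the absorption fact precisely right, i.e.\ checking that merging the two side conditions neither drops structure that some $\mtcl O^{\tld p,\a''}$ is required to contain nor breaks the sandwiching bounds — this is exactly where the hypothesis $q\leq_\a s\av_\a$ is used essentially.
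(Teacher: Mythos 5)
Your proof is correct and follows essentially the same route as the paper, whose entire argument is the one-line observation that every marker in $\D_q$ is at most $\a$ and hence bounds the influence of $q$'s side conditions in clauses $(C1)$ and $(C3)$ at levels $\geq\a$; your ``absorption fact'' (using $(D1)$ and $(D4)$ to handle levels below $\a$ as well) is exactly the verification the paper leaves implicit. No gaps.
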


\begin{proof}
It suffices to note that if the pair $(N, \g)$ is in $\D_q$, then the marker $\g$ (which bounds the influence of the side condition $N$ in clauses $(C1)$ and $(C3)$) is at most  $\alpha$.

\end{proof}

The following lemma shows that all forcings $\mtcl P_\b$ are $\al_2$--Knaster, and so in parti\-cu\-lar have the $\al_2$--chain condition.\footnote{A forcing $P$ is $\mu$--$Knaster$ if every
subset of $P$ of cardinality $\mu$ includes a subset of cardinality
$\mu$ of pairwise compatible conditions.} The proof uses standard $\D$--system arguments\footnote{This is the only place where we make use of $\textsc{CH}$.} (see Fact \ref{fact2} and Lemma \ref{l00}).

\begin{lemma}\label{cc}
For every $\b\leq\k$ and every set $\{(p_\x, \D_\x)\,:\,\x<\o_2\}$ of $\mtcl P_\b$--conditions there is $I\sub\o_2$ of size $\al_2$ such that for all $\x$, $\x'$ in $I$:
\begin{itemize}
\it[($\circ$)] if $\g \leq \beta$ and $\g < \k$, then $\D_{\x}^{-1}(\g) \cup \D_{\x'}^{-1}(\g)$ is a $\vec{T}^{\g+1}$--symmetric $\mtcl M^{\g+1}_\ast$--system of structures,
\it[($\circ$)] if $\a\in dom(p_\x)\cap dom(p_{\x'})$, $(f^{p_\x, \a}, b^{p_\x, \a})=(f^{p_{\x'}, \a}, b^{p_{\x'}, \a})$ and $\mtcl O^{p_\x, \a}\cup\mtcl O^{p_{\x'}, \a}$ is a $\vec{T}^{\a+2}$--symmetric $\mtcl M^{\a+2}_\ast$--system of structures, and

\it[($\circ$)] letting $p^\ast(\a)=(f^{p_\x, \a}\cup f^{p_{\x'}, \a}, b^{p_\x, \a}\cup b^{p_{\x'}, \a}, \mtcl O^{p_\x, \a}\cup\mtcl O^{p_{\x'}, \a})$ for all $\a\in dom(p_\x\cup p_{\x'})$, $(p^\ast, \D_\x\cup\D_{\x'})$ is a $\mtcl P_\b$--condition extending both $(p_\x, \D_\x)$ and $(p_{\x'}, \D_{\x'})$.\footnote{If $\a$ is not in the domain of $p_\x$, then by $(f^{p_\x, \a}, b^{p_\x, \a}, \mtcl O^{p_\x, \a})$ we will mean $(\emptyset,\emptyset,\emptyset)$ and similarly for $p_{\x'}$.}
\end{itemize}

In particular, $\mtcl P_\b$ is $\al_2$--Knaster.
\end{lemma}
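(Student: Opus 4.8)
The plan is to prove Lemma \ref{cc} by a standard $\Delta$--system argument, using $\textsc{CH}$ to thin out the given $\aleph_2$ conditions to a homogeneous subfamily of size $\aleph_2$ on which the three bullet points hold simultaneously. First I would pass to the relevant combinatorial data attached to each condition $(p_\x, \D_\x)$: the finite set $\bigcup\D_\x^{-1}(\cdot)$ of structures together with all the finite functions $f^{p_\x,\a}$, $b^{p_\x,\a}$ and finite systems $\mtcl O^{p_\x,\a}$ indexed by $\a\in dom(p_\x)$. Since each condition carries only finitely many ordinals in $dom(p_\x)$, finitely many structures in the side condition, and each such structure is a countable subset of $H(\k)$, the total amount of finite bookkeeping data is controlled, and $\textsc{CH}$ (i.e.\ $|[H(\k)]^{\al_0}|$ computations and the fact that each $\mtcl O^{p_\x,\a}$ and each $\la N,\in\ra$ has a countable \emph{type}) lets me arrange that the supports and the structures form a $\Delta$--system with a common root and pairwise isomorphic tails.

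The key steps, in order, are as follows. First, apply the $\Delta$--system lemma to the finite sets $dom(p_\x)$ to get a subfamily of size $\aleph_2$ with a fixed root $r_0 \sub \kappa$, and simultaneously to the finite sets $\{\d_N : N\in\D_\x^{-1}(\g)\}$ of ordinals; refining further, arrange that on the root the data $(f^{p_\x,\a}, b^{p_\x,\a})$ is literally \emph{constant} for $\a$ in the common support, which is exactly what the second bullet demands. Second, using $\textsc{CH}$, thin out so that all the structures $\bigcup\D_\x$ have the same \emph{elementary diagram} in the predicate $T^{\g+1}$ and are pairwise isomorphic over their common intersection; this is where I would invoke Fact \ref{fact2}, which is precisely the tool that certifies that the union of two symmetric systems is again a symmetric system once one has an isomorphism fixing the common part $X = (\bigcup\mtcl N_0)\cap(\bigcup\mtcl N_1)$ and matching the truth predicates. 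Third, having secured the first two bullets, verify the third: I would check clauses $(C0)$--$(C3)$ for the amalgamated condition $(p^\ast, \D_\x\cup\D_{\x'})$, using that restriction commutes with union (clause $(C2)$), that $(C1)$ holds by the symmetric-system amalgamation just established, and that $(C3.1)$--$(C3.3)$ are preserved because $f^{p_\x,\a}=f^{p_{\x'},\a}$ and $b^{p_\x,\a}=b^{p_{\x'},\a}$ agree on the overlap while $\mtcl O^{p_\x,\a}\cup\mtcl O^{p_{\x'},\a}$ is a legitimate $\T$--condition component by Lemma \ref{l00}. That $p^\ast(\a)$ is forced to be a $\Theta_{\Phi^\ast(\a)}$--condition, and that it extends both coordinates, then follows from Lemma \ref{l00} applied coordinatewise below $q\av_\a$.

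I expect the main obstacle to be step two: arranging genuine pairwise \emph{isomorphism} of the structure-systems, not merely equal types, in a way that supplies the isomorphism $\Psi$ fixing $X$ required by the hypothesis of Fact \ref{fact2}, and doing so uniformly across the predicates $\vec T^{\g+1}$ at every relevant $\g\leq\b$ at once. The subtlety is that the markers $\g$ attached to each $N$ range over possibly different ordinals, so I must coordinate the $\Delta$--system refinement across \emph{all} the level-predicates simultaneously rather than one $\g$ at a time, and ensure that the induced map respects the membership relation $\in$ between nested structures (so that clause $(\g)$ and clause $(\d)$ of Definition \ref{symm_system} survive in the union). The saving grace is that $\textsc{CH}$ makes the space of possible countable types countable, so a single pass of the $\Delta$--system lemma followed by finitely many countable-type refinements leaves an $\aleph_2$--sized homogeneous set, after which Fact \ref{fact2} discharges the symmetry requirement and the remaining verifications are the routine checks flagged in the statement.
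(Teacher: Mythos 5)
Your proposal is correct and follows exactly the route the paper intends: the paper in fact omits the proof of Lemma \ref{cc}, remarking only that it is a standard $\Delta$--system argument (the unique use of $\textsc{CH}$) resting on Fact \ref{fact2} for amalgamating the symmetric systems and Lemma \ref{l00} for the $\T$--components, which is precisely your plan. One small slip: under $\textsc{CH}$ the number of isomorphism types of the countable structures involved is $\al_1$ rather than countable, but since only ``fewer than $\al_2$'' is needed for the thinning, this does not affect the argument.
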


\begin{corollary}\label{susto}
If $\dot{\mtcl C}$ is a $\mtcl P_\a$--name for a club--sequence, then $\mtcl P_\a$ forces $\T_{\dot{\mtcl C}}$ to have the $(\al_2)^V$--chain condition.\footnote{By lemma \ref{horribilis} we will see that $(\al_2)^V=(\al_2)^{V^{\mtcl P_\a}}$.} Hence, every maximal antichain of $\T_{\dot{\mtcl C}}$ is forced to be a member of $H(\k)$.\end{corollary}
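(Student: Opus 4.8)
The plan is to deduce the second assertion from the first, and to establish the first --- that $\mtcl P_\a$ forces $\T_{\dot{\mtcl C}}$ to be $(\al_2)^V$--c.c. --- by an argument carried out in $V$ showing that among any $(\al_2)^V$ many conditions of $\T_{\dot{\mtcl C}}$ two are forced to be compatible. For the implication, note that each condition $(f, b, \mtcl O)$ of $\T_{\dot{\mtcl C}}$ is (forced to be) a member of $H(\k)$: $f$ and $b$ are finite functions on $\o_1$ and $\mtcl O$ is a finite set of countable subsets of $H(\k)$. Using that $(\al_2)^V=(\al_2)^{V^{\mtcl P_\a}}$ (by Lemma \ref{horribilis}), once $\T_{\dot{\mtcl C}}$ is forced $(\al_2)^V$--c.c. every maximal antichain has size $<\al_2\leq\k$; as $\mathrm{cf}(\k)\geq\al_2$, a set of fewer than $\al_2$ elements of $H(\k)$ is again in $H(\k)$, which gives the second assertion.

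For the chain condition I would argue by contradiction. Suppose $p\in\mtcl P_\a$ forces $\dot A\sub\T_{\dot{\mtcl C}}$ to be an antichain of size $\geq(\al_2)^V$, and fix names $\dot a_\x$ ($\x<\o_2$) that $p$ forces to enumerate distinct members of $\dot A$. For each $\x$ I would choose $p_\x\leq_\a p$ deciding $\dot a_\x$ to be a concrete triple $(f_\x, b_\x, \mtcl O_\x)$; since the elements of $\mtcl O_\x$ are side conditions of $\mtcl P_\a$ (hence in $V$) and $f_\x, b_\x$ are finite functions on $\o_1$, this triple lies in $V$. Because $p_\x$ forces $\mtcl O_\x\sub\mtcl N_{\dot G_\a}$ and $\mtcl O_\x$ is finite, I may further extend $p_\x$ so that $\mtcl O_\x\sub\D_{p_\x}^{-1}(\a)$. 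As there are only $\al_1$ many finite functions on $\o_1$, a first thinning fixes $(f_\x, b_\x)=(f, b)$ on a set of size $\al_2$.

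The heart of the argument is a simultaneous amalgamation of the side conditions. First, applying Lemma \ref{cc} to the family $\{p_\x\}$ (as $\mtcl P_\a$--conditions) I pass to a further set of size $\al_2$ on which the $p_\x$ are pairwise compatible, with common extensions $p^\ast$ whose top--level system $\D_{p^\ast}^{-1}(\a)$ contains $\mtcl O_\x\cup\mtcl O_{\x'}$. The point that needs care is that Lemma \ref{cc} at level $\a$ only makes $\D_{p_\x}^{-1}(\a)\cup\D_{p_{\x'}}^{-1}(\a)$ a $\vec T^{\a+1}$--symmetric $\mtcl M^{\a+1}_\ast$--system, whereas condition (1) in the definition of $\T_{\dot{\mtcl C}}$ and Lemma \ref{l00} require $\mtcl O_\x\cup\mtcl O_{\x'}$ to be a $\vec T^{\a+2}$--symmetric $\mtcl M^{\a+2}_\ast$--system; being a subset of a $\vec T^{\a+1}$--symmetric system does not suffice, since the canonical isomorphisms $\Psi_{N, N'}$ must now in addition respect the predicate $T^{\a+1}$. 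I would therefore re--run the $\Delta$--system argument of Lemma \ref{cc}, but with the parameters $\vec T^{\a+2}$ and $\mtcl M^{\a+2}_\ast$, applied to the systems $\mtcl O_\x$: using $\textsc{CH}$ to bound by $\al_1$ the number of $\vec T^{\a+2}$--isomorphism types of the $\mtcl O_\x$ over a common $\Delta$--system root, I thin once more to a set $I$ of size $\al_2$ so that, by Fact \ref{fact2}, $\mtcl O_\x\cup\mtcl O_{\x'}$ is a $\vec T^{\a+2}$--symmetric $\mtcl M^{\a+2}_\ast$--system for all $\x, \x'\in I$.

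Finally, fixing distinct $\x, \x'\in I$ and a common $\mtcl P_\a$--extension $p^\ast$ of $p_\x$ and $p_{\x'}$, we have $\mtcl O_\x\cup\mtcl O_{\x'}\sub\D_{p^\ast}^{-1}(\a)$, so $p^\ast$ forces $\mtcl O_\x\cup\mtcl O_{\x'}\sub\mtcl N_{\dot G_\a}$; by Lemma \ref{l00}, $p^\ast$ then forces $(f, b, \mtcl O_\x\cup\mtcl O_{\x'})$ to be a condition of $\T_{\dot{\mtcl C}}$ extending both $\dot a_\x$ and $\dot a_{\x'}$, contradicting that $p$ (hence $p^\ast$) forces them incompatible. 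I expect the main obstacle to be exactly the index mismatch flagged above: coordinating the $\mtcl P_\a$--compatibility of the $p_\x$ with the finer $\vec T^{\a+2}$--symmetric amalgamation of the $\mtcl O_\x$, which is why the relevant instance of the $\Delta$--system argument must be carried out at the parameters $\vec T^{\a+2}$, $\mtcl M^{\a+2}_\ast$ rather than merely quoted from Lemma \ref{cc} at level $\a$.
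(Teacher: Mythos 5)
Your proposal is correct and follows essentially the same route as the paper: decide the antichain members in $V$, thin by the $\D$--system/Knaster argument (Lemma \ref{cc} together with Fact \ref{fact2} applied at the parameters $\vec T^{\a+2}$, $\mtcl M^{\a+2}_\ast$), and amalgamate via Lemma \ref{l00} to contradict incompatibility. The paper's own proof is just a terser rendering of this, citing Fact \ref{fact2} and Lemma \ref{l00} for precisely the extra thinning of the $\mtcl O_\x$'s that you spell out.
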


\begin{proof}
Suppose $\dot{A}$ is a $\mtcl P_\a$--name for a maximal antichain of $\T_{\dot{\mtcl C}}$ of size $(\al_2)^V$. For each $\zeta \in \o_2$, let $p_\zeta$ be a $\mtcl P_\a$--condition forcing $\dot{a}_{\zeta}=\check{b}_{\zeta}$ for some $b_\zeta$ in $V$, where $\dot{a}_{\zeta}$ denotes the  $\zeta$-th element of $\dot{A}$. By the above lemma, we may assume that all the $p_\zeta$'s are pairwise $\mtcl P_\a$--compatible. Using Fact \ref{fact2} and Lemma \ref{l00}, we can also assume that any common extension of $p_\zeta$ and $p_{\zeta'}$ forces that $\check{b}_{\zeta}$ and $\check{b}_{\zeta'}$ are $\T_{\dot{\mtcl C}}$--compatible, but that is a contradiction.
\end{proof}

Strictly speaking, the following lemma will not be needed in the rest of the paper. However, its proof is a convenient warm-up for the proof of conclusion $(2)_\b$ of Lemma \ref{horribilis}.

\begin{lemma}\label{preproper}
Let $\b<\k$, and suppose $q=(p,\D_q)$ is $(M,\,\mtcl P_\b)$--generic whenever $q\in\mtcl P_\b$ and $M\in\D^{-1}_q(\b) \cap\mtcl M^{\b+1}$.\footnote{We will see in Lemma \ref{horribilis} that this hypothesis is true.} If $\dot{\mtcl C}$ is a $\mtcl P_\b$--name for a club--sequence $(C_\d)_{\d<\o_1}$, then $\mtcl P_\b$ forces that $\s_0=(f_0, b_0, \mtcl O_0)$ is $(N[\dot G_\b],\,\T_{\dot{\mtcl C}})$--generic whenever $\s_0\in\T_{\dot{\mtcl C}}$, $N\in\mtcl O_0\cap\mtcl M^{\b+2}$, $\d_N\in dom(f_0)$, and $f_0(\d_N)=\d_N$.
\end{lemma}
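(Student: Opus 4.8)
The plan is to work in $V[\dot G_\b]$ and show that $\s_0=(f_0,b_0,\mtcl O_0)$ meets (inside $N[\dot G_\b]$) every maximal antichain of $\T_{\dot{\mtcl C}}$ that belongs to $N[\dot G_\b]$. So fix such a maximal antichain $A\in N[\dot G_\b]$. By Corollary~\ref{susto}, $A$ is (forced to be) a member of $H(\k)$, and since $N$ is elementary in $\la H(\k),T^{\b+2}\ra$ (via $N\in\mtcl M^{\b+2}$) the relevant density statements about $A$ are reflected into $N$; in particular $N[\dot G_\b]\prec H(\k)^{V[\dot G_\b]}$ by Lemma~\ref{lemma0}. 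The task is to find a condition $\s_1\leq\s_0$ with $\s_1\in A\cap N[\dot G_\b]$, equivalently to find some $\s\in A\cap N[\dot G_\b]$ compatible with $\s_0$ and then amalgamate. First I would take an arbitrary $\s_1=(f_1,b_1,\mtcl O_1)\leq\s_0$ and a condition $\s_2=(f_2,b_2,\mtcl O_2)\in A$ below $\s_1$ (possible since $A$ is a maximal antichain), and reflect: using genericity of $q$ for $M$-many $M$ together with elementarity, I would pull a suitable ``copy'' of $\s_2$ back into $N[\dot G_\b]$.

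The key device is symmetry. The distinguished structure $N\in\mtcl O_0$ has $\d_N\in dom(f_0)$ and $f_0(\d_N)=\d_N$, so $N$ sees an initial segment of the club being added. I would apply the $(M,\mtcl P_\b)$-genericity hypothesis to obtain, inside $N[\dot G_\b]$, a condition $\s\in A$ whose finite data $(f,b)$ agrees with that of $\s_2$ below $\d_N$ and whose side-condition system $\mtcl O$ is an isomorphic copy of $\mtcl O^{\s_2}$ under the symmetry isomorphisms $\Psi_{N',N''}$ supplied by clause~$(\b)$ of Definition~\ref{symm_system}. The point is that, because $\mtcl O_0$ is a $\vec T^{\b+2}$-symmetric system and $N$ sits at the top, the structures of $\mtcl O^{\s_2}$ lying outside $N$ can be ``folded down'' to structures inside $N$ of the same rank, using clauses $(\g)$ and $(\d)$; this produces $\s\in N[\dot G_\b]\cap A$ together with an isomorphism matching $\s$ and $\s_2$.

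Then I would verify that $\s$ and $\s_2$ (hence $\s$ and $\s_1$, hence $\s$ and $\s_0$) are compatible in $\T_{\dot{\mtcl C}}$, by checking that $\mtcl O^{\s}\cup\mtcl O^{\s_2}$ is again a $\vec T^{\b+2}$-symmetric $\mtcl M^{\b+2}_\ast$-system: this is exactly the content of Fact~\ref{fact2}, whose hypotheses are met because $\s$ was built as an isomorphic copy of $\s_2$ fixing the common part $X=\bigcup\mtcl O^{\s}\cap\bigcup\mtcl O^{\s_2}$. Having this, Lemma~\ref{l00} yields that $(f_2,b_2,\mtcl O^{\s}\cup\mtcl O^{\s_2})$ is a common extension, so $\s\in A\cap N[\dot G_\b]$ is compatible with $\s_0$, which is what $N[\dot G_\b]$-genericity requires. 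The main obstacle I anticipate is the bookkeeping in the previous paragraph: one must check that the rank conditions $(2.2)$ and $(3.3)$ and the tail-avoidance conditions in $(3.2)$ are \emph{preserved} when $\s_2$ is copied to $\s$ inside $N$. Here the crucial observation is that the Cantor--Bendixson rank is invariant under the symmetry isomorphisms (by Fact~\ref{fact0}, $\Psi_{N',N''}$ is an isomorphism of the structures carrying $\mtcl N_{\dot G_\b}\cap\mtcl M^{\b+2}_\ast$), and that the club-sequence data $C_{f(\n)}$ for $\n$ below $\d_N$ is decided identically on both sides since $f$ and $f_2$ agree there; so the ranks and the avoidance requirements transfer, and this is the step requiring the most care.
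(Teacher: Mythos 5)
Your overall shape (reflect a condition into $N[\dot G_\b]$, then amalgamate side conditions by symmetry via Fact \ref{fact2} and Lemma \ref{l00}) matches the paper, but the proposal is missing the two devices that make the reflection work, and the step you yourself flag as requiring the most care is exactly where it breaks. First, you ask for a condition $\s\in A\cap N[\dot G_\b]$ whose pair $(f,b)$ agrees with that of $\s_2$ below $\d_N$. No member of the antichain $A$ need have this property: $f_2$ is an arbitrary extension of $f_0$ reaching into $A$, and the element $\s^\dag$ of $A$ that $\s_2$ extends need not satisfy any agreement with $f_2\restr\d_N$, nor lie in $N[\dot G_\b]$. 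The paper circumvents this with an auxiliary map $\tld A\in N[\dot G_\b]$ assigning to each $\s\in A$ a canonical \emph{extension} with prescribed agreement with $(f_0\restr\d_N, b_0\restr\d_N,\mtcl O_0\cap N)$; one reflects a member $\s_\ast$ of $range(\tld A)$ and only then passes back to $\tld A^{-1}(\s_\ast)\in A\cap N[\dot G_\b]$. Relatedly, ``folding $\mtcl O^{\s_2}$ down into $N$'' cannot work as stated: $\mtcl O^{\s_2}$ contains $N$ itself and other structures with $\d$--value $\geq\d_N$, which have no isomorphic copies inside $N$. The correct move is the opposite one: find $\s_\ast$ with $\mtcl O_\ast$ contained in some $M\in N$, and spread $\mtcl O_\ast$ \emph{outwards} to every $N'\in\mtcl O_0$ with $\d_{N'}=\d_N$ via $\Psi_{N,N'}$ to restore closure under the isomorphisms.

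Second, and more seriously: since the reflected condition cannot literally copy $f_2\restr\d_N$, it will introduce new values of $f$ in the interval between $\max(range(f_0\restr\d_N))$ and $\d_N$, and when $\d_N\in dom(b_0)$ clause $(3.2)$ of the definition of $\T_{\dot{\mtcl C}}$, applied to the amalgam at $\n=\d_N$, requires all these new values to avoid $C_{\d_N}$. Your remark that ``$C_{f(\n)}$ for $\n$ below $\d_N$ is decided identically on both sides'' addresses only the harmless instances; the dangerous one is $C_{\d_N}$ itself, a club of $\d_N$ which elementarity of $N[\dot G_\b]$ cannot steer around, since $\d_N\notin N$. This is precisely what clause $(3.3)$ is for: it supplies $M\in N\cap\mtcl N_{\dot G_\b}\cap\mtcl M^{\b+2}_\ast$ with $\tld A\in M[\dot G_\b]$ and $\d_M\notin C_{\d_N}$, hence an interval $[\eta,\,\d_M]$ disjoint from $C_{\d_N}$; one then chooses $\s_\ast\in M[\dot G_\b]$ all of whose new $f$--values exceed $\eta$, and the $(M,\mtcl P_\b)$--genericity hypothesis of the lemma guarantees $\d_{M[\dot G_\b]}=\d_M$, so those values all land in $(\eta,\,\d_M)$, off $C_{\d_N}$. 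Without this mechanism the amalgam $(f_\ast\cup f_0,\, b_\ast\cup b_0,\,\mtcl O')$ need not be a condition, and the argument does not close.
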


\begin{proof}
Let us work in $V^{\mtcl P_\b}$. Let $A\in N[\dot G_\b]$ be a maximal antichain of $\T_{\dot{\mtcl C}}$. By extending $\s_0$ if necessary we may assume that $\s_0$ extends a condition $\s^\dag$ in $A$. We want to show of course that $\s^\dag$ is in $N[\dot G_\b]$, and for this it will suffice to find a member of $A\cap N[\dot G_\b]$ compatible with $\s_0$.

We may assume that $\d_N\in dom(b)$, as otherwise the proof is slightly simpler. Let $\m= max(range(f_0\restr\d_N))+1$,\footnote{Note that $range(f_0\restr\d_N) \neq \emptyset$ by condition (3.1) in the definition of $\T_{\dot{\mtcl C}}$.} let $\tld A\in N[\dot G_\b]$ be the (partially defined) function sending each $\s\in A$ to the first $\T_{\dot{\mtcl C}}$--condition $(f', b', \mtcl O')$ extending $\s$ (in some canonical well--ordering given by $\Phi$) and such that $f_0\restr\d_N \sub f'$, $range(f')\cap\m=range(f_0)\cap\m$, $b_0\restr\d_N\sub b'$ and $\mtcl O_0\cap N\sub\mtcl O'$ (whenever this is possible),\footnote{$\tld A$ is in $N[\dot G_\b]$ since $(f_0\restr\d_N, b_0\restr\d_N, \mtcl O_0)\in N$ and $\tld A$ is definable in the structure $\la H(\k)^{V[\dot G_\b]}, \dot G_\b, T^{\b+2}\ra$ from $(f_0\restr\d_N, b_0\restr\d_N, \mtcl O_0)$ (by Lemma \ref{lemma0}).} and let $M\in N\cap \mtcl N_{\dot G_\b}\cap\mtcl M^{\b+2}_\ast$ be such that $\tld A\in M[\dot G_\b]$, $\b+1\in M$, and $\d_M\notin C_{\d_N}$. Such an $M$ exists by condition (3.3) in the definition of $\T_{\dot{\mtcl C}}$. Let $\eta<\d_M$ be such that $[\eta,\,\d_M]\cap C_{\d_N}=\emptyset$ (this $\eta$ exists by openness of $\d_N\setminus C_{\d_N}$). Now, in $M[\dot G_\b]$ there is $\s_\ast=(f_\ast, b_\ast, \mtcl O_\ast)$
such that

\begin{itemize}

\it[(a)] $\s_\ast\in range(\tld A)$, and

\it[(b)] $min(range(f_\ast)\bs\m)>\eta$.

\end{itemize}

Note that $max(range(f_\ast))<\d_M$ since $\d_{M[\dot G_\b]}= \d_M$ by our assumption on $\mtcl N_{\dot G_\b}\cap\mtcl M^{\b+1}$. Let $$\mtcl O' = \mtcl O_0 \cup \{\Psi_{N, N'}(M)\,:\,M\in\mtcl O_\ast,\,N'\in \mtcl O_0,\,\d_{N'}=\d_N\}$$ Now it is easy to check that $(f_\ast\cup f_0, b_\ast\cup b_0, \mtcl O')$ is a common extension of $\s_\ast$ and $\s_0$ in $\Theta_{\dot{\mtcl C}}$ (this uses condition $(C1)$ in the definition of
$\mtcl P_\b$ for the verification of conditions $(1)$, $(2.2)$ and $(3.3)$ in the definition of $\Theta_{\dot{\mtcl C}}$). Letting $\ov\s\in A\cap N[\dot G_\b]$ be the unique $\s\in A$ such that $\tld A(\s)=\s_\ast$,\footnote{Note that $\tld A$ is one--to--one.}
it follows that $\ov\s=\s^\dag$.
\end{proof}

The following lemma can be proved easily by induction on $\a$. It will be used in the proof of Lemma \ref{horribilis}.

\begin{lemma}\label{self-deciding} For all $\a<\k$, $v\in\mtcl P_\a$, and $\d<\o_1$ there is $w=(\ov w, \D_w)\in\mtcl P_\a$ extending $v$ together with $\eta<\d$ such that for all $M\in N$ and all $\x\in dom(\ov w)\cap N$, if $M$ and $N$ are both in $\D_{w\av_{\x+1}}^{-1}(\x+1)$,  $\d_N = \d \in dom(b^{\ov w, \x})$ and $b^{\ov w, \x}(\d_N)<\d_M = f^{\ov w, \x}(\d_M)$, then $w\av_\x\Vdash_{\mtcl P_\x} \dot{C}^\x_{\d_N}\cap[\eta,\,\d_M]=\emptyset$.\end{lemma}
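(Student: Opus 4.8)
The plan is to prove this by induction on $\a<\k$, building $w$ and $\eta$ simultaneously and using condition $(3.2)$ in the definition of $\T_{\dot{\mtcl C}}$ to extract the required disjointness. The key observation is that condition $(3.2)$ already guarantees, for a single commitment $b^{\ov w,\x}(\d_N)$, that the values $f^{\ov w,\x}(\n_0)$ for $\n_0$ strictly between $b^{\ov w,\x}(\d_N)$ and $\d_N=\d$ lie outside $C^\x_{\d_N}$, and moreover that the \emph{intervals} $[f(\n_0),f(\n_1)]$ for such indices avoid $C^\x_{\d_N}$. What the lemma asks for is a uniform $\eta<\d$ working for all relevant $M$, $N$, $\x$ simultaneously. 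The natural candidate for $\eta$ is the supremum of the finitely many values $f^{\ov w,\x}(b^{\ov w,\x}(\d_N)+1)$ (or one more than the maximum of range points below the first witness), taken over all the finitely many triples $(M,N,\x)$ appearing in $\D_w$ and $dom(\ov w)$.

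\textbf{The induction and how the steps are carried out.} First I would set up the induction on $\a$, handling $\a=0$ trivially (there are no $\x\in dom(\ov w)$, so the statement is vacuous) and limit $\a$ by noting that $v\restr\a'$ lives in some earlier $\mtcl P_{\a'}$ for the finitely many coordinates in $dom(v)$. For the successor step (and for the main content at arbitrary $\a$), I would first extend $v$ finitely so as to decide, for each coordinate $\x\in dom(p)$ and each pair $M,N\in\D_{v\av_{\x+1}}^{-1}(\x+1)$ with $\d_N=\d$, whether $\x\in dom(b)$ and whether $b^{p,\x}(\d_N)<\d_M=f^{p,\x}(\d_M)$; since all supports are finite, only finitely many such configurations arise. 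Next, coordinate by coordinate in increasing order of $\x$, I would extend $w\av_\x$ to force a concrete value of $C^\x_{\d_N}$ below $\d$, or rather to force the relevant instance of $(3.2)$: because $(f^{\ov w,\x},b^{\ov w,\x},\mtcl O^{\ov w,\x})$ is forced by $w\av_\x$ to be a $\T_{\Phi^\ast(\x)}$-condition, clause $(3.2)$ gives that $f^{\ov w,\x}(\n_0)\notin C^\x_{\d_N}$ for $b^{\ov w,\x}(\d_N)<\n_0<\d_N$ and that $[f(\n_0),f(\n_1)]\cap C^\x_{\d_N}=\emptyset$ for appropriate indices. Taking $\n_0=b^{\ov w,\x}(\d_N)+1$ (which is in $dom(f)$ by $(3.1)$) and using the interval clause up to $\d_M=f(\d_M)$, I obtain a forced disjointness of $C^\x_{\d_N}$ from an interval $[f^{\ov w,\x}(b^{\ov w,\x}(\d_N)+1),\,\d_M]$.

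\textbf{Choosing $\eta$ and gluing.} Finally I would let $\eta$ be the maximum, over the finitely many triples $(M,N,\x)$ under consideration, of the ordinals $f^{\ov w,\x}(b^{\ov w,\x}(\d_N)+1)$; since each such ordinal is below $\d_N=\d$ and there are finitely many of them, $\eta<\d$. By the disjointness obtained coordinatewise, for each relevant triple $w\av_\x$ forces $\dot{C}^\x_{\d_N}\cap[\eta,\d_M]=\emptyset$, because $[\eta,\d_M]\sub[f^{\ov w,\x}(b^{\ov w,\x}(\d_N)+1),\d_M]$ and we have already forced that the larger interval is disjoint from $C^\x_{\d_N}$. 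Collecting all the finitely many extensions into a single $w$ (possible since $\mtcl P_\a$ is a forcing iteration with finite supports and the extensions are along distinct coordinates or along a decreasing chain of restrictions) completes the construction.

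\textbf{Main obstacle.} The delicate point I expect is bookkeeping the interaction between the coordinate-by-coordinate extensions and the fact that $f^{\ov w,\x}$, $b^{\ov w,\x}$ are themselves only names whose values are decided by $w\av_\x$, so that the disjointness statements are forced rather than absolute; one must be careful that extending $w$ at a later coordinate $\x'>\x$ does not disturb the value of $f^{\ov w,\x}$ already fixed, which is guaranteed by the restriction operation $(C2)$ and by $(D3)$ (extensions only enlarge $f^{p,\x}$, never alter existing values). The other subtlety is ensuring $\eta$ can be chosen \emph{uniformly} below $\d$; this works precisely because finiteness of the supports bounds the number of triples, so the supremum of the finitely many witnesses stays strictly below $\d$.
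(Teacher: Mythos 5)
Your proposal is correct and is essentially the argument the paper has in mind (the paper omits the proof, merely asserting it "can be proved easily by induction on $\a$"): clause $(3.2)$ of the definition of $\T_{\dot{\mtcl C}}$, applied with $\n_0=b^{\ov w,\x}(\d)+1$ and $\n_1=\d_M$ (note $\n_0+1<\n_1$ holds because $\d_M$ is a limit ordinal above $b^{\ov w,\x}(\d)$), together with $(C3.2)$--$(C3.3)$, yields the forced disjointness of $\dot C^\x_{\d}$ from $[f^{\ov w,\x}(b^{\ov w,\x}(\d)+1),\d_M]$, and finiteness of $dom(\ov w)$ then gives a uniform $\eta<\d$. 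The only inessential detour is your preliminary "deciding" step: since $f^{p,\x}$, $b^{p,\x}$, $\mtcl O^{p,\x}$ and $\D_w$ are concrete ground-model objects rather than names, all the relevant configurations are already determined by the condition itself, so in fact one may even take $w=v$.
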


The properness of all $\mtcl P_\b$ ($\beta < \kappa$) is an immediate consequence of the following lemma.

\begin{lemma}\label{horribilis}

Suppose $\b < \kappa$ and $N \in \mtcl M^{\b+1}$. Then the following conditions hold.

\begin{itemize}

\it[$(1)_\b$] For every $q\in N \cap \mtcl P_\b$ there is $q'\leq_\b q$ such that $N \in \D^{-1}_{q'}(\b)$.

\it[$(2)_\b$] If $q\in\mtcl P_\b$ and $N \in \D^{-1}_{q}(\b)$, then $q$ is $(N,\, \mtcl P_\b)$--generic.

\end{itemize}

\end{lemma}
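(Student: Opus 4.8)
The plan is to establish $(1)_\b$ and $(2)_\b$ by a single induction on $\b<\k$, at stage $\b$ invoking $(1)_\a$ and $(2)_\a$ for all $\a<\b$ together with the lemmas already proved. The recurring conventions are these: since $q\in N$ and $N\elsub\la H(\k),T^{\b+1}\ra$, every component of $q$ lies in $N$, so each $M\in\D^{-1}_q(\b)$ satisfies $M\in N$ and hence $\d_M<\d_N$; thus $N$ always sits strictly on top of the symmetric system it is added to. I also use that $N\in\mtcl M^{\b+1}$ implies $N\in\mtcl M^{\b+1}_\ast$ and, for $\a<\b$, $N\in\mtcl M^{\a+2}_\ast$ (because $T^{\b+1}$ codes the lower truth predicates $T^\x$ for $\x\in N\cap(\b+1)$), so that $N$ is a legitimate side condition at level $\b$ and at every earlier level.

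For $(1)_\b$ I set $q'=(p',\D_q\cup\{(N,\b)\})$, where $p'$ extends $p$ by putting $f^{p,\a}(\d_N)=\d_N$ for each $\a\in dom(p)$ (note $\a\in N$). Because $N$ has the largest height, clauses $(\g)$ and $(\d)$ of Definition \ref{symm_system} for the new pairs are witnessed by $N$ itself, and clause $(\b)$ is vacuous at height $\d_N$, so $\D^{-1}_{q'}(\b)=\D^{-1}_q(\b)\cup\{N\}$ is again a $\vec T^{\b+1}$--symmetric $\mtcl M^{\b+1}_\ast$--system. Since $\d_N$ is placed above all values already occurring, the requirements $(2)$, $(3.1)$ and $(3.2)$ on the old part of each $f^{p,\a}$ are untouched, and the only genuine obligation is $(C3.3)$ together with the rank clause $(2.2)$ at the point $\d_N$, i.e.\ $rank(\mtcl N_{\dot G_\a}\cap\mtcl M^{\a+2}_\ast,N)\geq\d_N$. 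This is exactly where $(2)_\a$ enters: genericity of the level $\a$ conditions makes the generic system rich inside $N$, so that every $a\in N$ is captured by a generic structure in $N$, which pushes the Cantor--Bendixson rank of $N$ up to $\d_N$. Compatibility $q'\leq_\b q$ is then immediate from $(D1)$--$(D4)$.

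For $(2)_\b$ I run the reflection/amalgamation argument of which Lemma \ref{preproper} is the promised warm-up, now for $\mtcl P_\b$ in place of a single $\T_{\dot{\mtcl C}}$. Fix a maximal antichain $A\in N$ and $q^\ast\leq_\b q$; since side conditions only grow, $N\in\D^{-1}_{q^\ast}(\b)$. After extending $q^\ast$ to meet $A$, I form inside $N$ the reflecting map $\tld A$ sending each member of $A$ to its first canonical extension that agrees with $q^\ast$ below $\d_N$ and keeps the trace of $q^\ast$ on $N$ (such extensions exist by $(1)_\a$ for $\a<\b$, and $\tld A\in N$ by elementarity of $N$). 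I then pick $M\in N$ of sufficiently large rank, with $\tld A\in M$ and $\d_M\notin C^\a_{\d_N}$ for each club--sequence named at a coordinate $\a\in dom(p^\ast)$ below $\b$; the existence of such an $M$ is what the rank clause $(3.3)$ provides, exactly as in Lemma \ref{preproper}. Locating the witness $q_\ast\in range(\tld A)\cap M$ and amalgamating it with $q^\ast$ -- gluing the two symmetric systems by Fact \ref{fact2} through the isomorphisms $\Psi_{N,N'}$ and matching the working parts below $\d_N$ by construction of $\tld A$ -- produces a common extension, whose associated $\ov q\in A\cap N$ is therefore compatible with $q^\ast$. Across the finitely many active coordinates below $\b$ the lower genericity $(2)_\a$ does the bookkeeping, and at the successor step $\b=\a+1$ the measuring content of this step is literally the computation in Lemma \ref{preproper}.

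The single real obstacle is showing that the amalgam is a genuine $\mtcl P_\b$--condition, and this is precisely the point at which measuring goes beyond the pure symmetric--system forcing of \cite{AM1}. Gluing the systems is controlled by Fact \ref{fact2}; the delicate part is the rank clauses $(2.2)$, $(3.3)$ and the avoidance commitments recorded by the functions $b^{p,\a}$. When $q_\ast$ has promised to avoid some $C^\a_\d$ on a tail while $q^\ast$ manipulates the same coordinate, the merged $b$--functions must stay consistent; I arrange this by first passing, via Lemma \ref{self-deciding}, to an extension that decides the relevant facts $\dot C^\x_{\d_N}\cap[\eta,\d_M]=\emptyset$, and by choosing $M$ with $\d_M\notin C^\a_{\d_N}$ of high rank as above, so that clause $(3.3)$ transfers to the amalgam because the reflected structures inherit their ranks through $\Psi_{N,N'}$. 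Corollary \ref{susto}, ensuring that every antichain of each $\T_{\Phi^\ast(\a)}$ is a member of $H(\k)$, keeps all these reflections inside the structures under consideration, and finiteness of the supports keeps the induction moving.
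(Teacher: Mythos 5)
Your treatment of $(1)_\b$ is essentially the paper's: add $N$ on top with marker $\b$, make $\d_N$ a fixed point of the active $f$'s, and derive the rank requirement $rank(\mtcl N_{\dot G_\a}\cap\mtcl M^{\a+2}_\ast,N)\geq\d_N$ from $(2)_\a$ via $N[\dot G_\a]\cap V=N$ and Lemma \ref{lemma0}. (You should also explicitly put $N$ into each active $\mtcl O^{p,\a}$, as clauses $(C3.1)$ and $(2.1)$ force you to, but that is cosmetic.) The genuine gaps are both in $(2)_\b$. First, you choose a single $M\in N$ of large rank with $\d_M\notin C^\a_{\d_N}$ \emph{simultaneously} for every active coordinate $\a$ with $\d_N\in dom(b^{p,\a})$. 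Clause $(3.3)$ gives, for each such coordinate separately, that the set of structures whose height avoids $C^\a_{\d_N}$ has rank $\geq\d_N$ at $N$; but Cantor--Bendixson rank is not preserved under finite intersections (two sets can each have arbitrarily large countable rank at a point while being disjoint), so the simultaneous avoidance you need is not delivered by $(3.3)$. The paper sidesteps this entirely: it singles out the \emph{maximal} active coordinate $\x_0$, takes $M$ to be $\Psi_{N',N}(M')$ for an actual side condition $M'$ of $q$ with marker $\x_0$, and handles the lower coordinates not by a rank argument but by Lemma \ref{self-deciding} together with Claim \ref{toronto}, which exploits that the copies $\Psi_{N,\ov N}(M)$ are themselves side conditions of $q$, so that $\d_M$ is a fixed point of the relevant $f^{p,\x}$ by $(C3.3)$. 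Your free-floating $M$ of ``sufficiently large rank'' carries none of this information.

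Second, when you close $\D_q\cup\D_t$ under the isomorphisms $\Psi_{N,N'}$ (as clause $(\d)$ of Definition \ref{symm_system} forces you to), new structures $W$ enter $\D^{-1}(\x+1)$ for active coordinates $\x$, and each such $W$ with $\x\in W$ triggers a fresh $(C3.3)$ obligation: $\d_W$ must already be a fixed point of $f^{p,\x}$, and, when $\d_N\in dom(b^{p,\x})$, the new values of $f^{\ov t,\x}$ must respect clause $(3.2)$ relative to $C^\x_{\d_W}$ as well. Fact \ref{fact2} only guarantees that the glued family is again a symmetric system; it says nothing about this interaction between the isomorphic closure and the working parts. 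This is precisely what conditions $(2)$ and $(4)$ in the paper's choice of the reflected condition $t$ are designed to control (``the closure of $\D_{q^\ast}$ under isomorphisms does not interfere with the elements of $X$''). Your proposal imposes no such requirement on the witness $q_\ast$ pulled out of $range(\tld A)$, so the amalgam you describe need not be a $\mtcl P_\b$--condition. Until these two points are repaired, the compatibility of $q$ with a member of $A\cap N$ is not established.
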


\begin{proof}
The proof of  $(2)_\b$ will be the same in all cases, and the proof of $(1)_\b$ will be by induction on $\b$. The proof of $(1)_0$ is trivial: It suffices to set $q'= q\cup\{(N, 0)\}$.


The proof of $(1)_\b$ when $\b=\a+1$ is as follows. Let $q= (p, \D_q)$. By $(1)_\a$ we may assume that there is a condition $t=(u, \D_t) \in \mtcl P_\a$ extending $q\av_\a$ and such that $N \in \D^{-1}_{t}(\a)$. This condition $t$ clearly forces (in $\mtcl P_\a$) that
$N \in \mtcl N_{\dot G_\a}$. So, $t$ forces that for every $x \in N$, there is $M\in \mtcl N_{\dot G_\a} \cap \mtcl M^{\a+1}$ such that $x \in M$.

Let us work in $V^{\mtcl P_\a\restr t}$. Since, by Lemma \ref{lemma0}, $\la N[\dot G_\a], \dot G_\a, T^{\a+2}, H(\k)^V\ra$ is an elementary substructure of $\la H(\k)[\dot G_\a], \dot G_\a, T^{\a+2}, H(\k)^V\ra$, there exists an $M$ as above in $N[\dot G_\a] \cap V$ (where $V$ denotes the ground model). We can also assume that $M \in N$, since $N[\dot G_\a]\cap V = N$ (which follows from $(2)_\a$ applied to $N$ and $t$). This shows that $t$ forces $rank(\mtcl N_{\dot G_\a} \cap \mtcl M^{\a+1}, N) \geq 1$. In fact, a similar argument shows that $t$ forces $rank(\mtcl N_{\dot G_\a} \cap \mtcl M^{\a+1}, N) > \mu$ for every $\mu < \delta_N$. In view of these considerations, it suffices to define $q'$ as the condition $(u', \D_q \cup \D_{t} \cup \{(N,\b)\})$, where $u'$ extends $u$ and sends the ordinal $\alpha$ to the triple $(f^{p, \a}\cup\{\la\d_N, \d_N\ra\}, b^{p, \a}, \mtcl O^{p, \a}\cup\{N\})$.

The proof of $(1)_\b$ when $\b$ is a nonzero limit ordinal is trivial using $(1)_\a$ for all $\a < \b$, together with the fact if $q=(p,\D) \in \mtcl P_\b$, then the domain of $p$ is bounded in $\b$.

Now let us proceed to the proof of $(2)_\b$ for general $\b$. Let $A\sub\mtcl P_\b$ be a maximal antichain in $N$, and suppose $q=(p,\D_q)$ extends a condition in $A$.
We want to see that $q$ is compatible with a condition in $A\cap N$.











Let $\x_0$ be the maximum of the set $X$ of $\x\in dom(p)$ such that $\x\in N'$ for some pair $(N', \g)\in\D_q$ with $\x<\g$ and $\d_{N'}=\d_N$. Let $(N', \g)\in \D_q$ witness $\x_0\in X$. By extending $q$ further if necessary, we may assume that there is some $M'\in N'$ such that $(M', \x_0)\in \D_q$ and such that $\Psi_{N, N'}(x)\in M'$ for all relevant $x\in N$.
Let $M=\Psi_{N', N}(M')$. If $\d_N \in dom(b^{p, \x_0})$, we may assume that there is $\eta<\d_M$ such that $q\av_{\x_0}$ forces $\dot C^{\x_0}_{\d_N}\cap[\eta,\,\d_M]=\emptyset$.
By Lemma \ref{self-deciding} we may further assume that $q\av_\x$ forces $\dot C^\x_{\d_N}\cap[\eta,\,\d_M]=\emptyset$ whenever $\x\in X$, $\d_N= \d_{\ov N}\in dom(b^{p, \x})$, $\ov M\in \ov N$, $\d_{\ov M}=\d_M$, $\ov M$, $\ov N$ are both in $\D_{q\av_{\x+1}}^{-1}(q\av_{\x+1})$, and $b^{p, \x}(\d_N)<\d_M = f^{p, \x}(\d_M)$.

\begin{claim} \label{toronto}
Let $\x \in X$.

\begin{itemize}

\it[(a)] If $\x = \x_0$ and $\d_N \in dom(b^{p, \x_0})$, then $q\av_{\x_0}$ forces $\dot C^{\x_0}_{\d_N}\cap[\eta,\,\d_M]=\emptyset$.
\it[(b)] If $\x \neq \x_0$, $(\ov N, \ov \g)\in\D_q$, $\delta_N= \delta_{\ov N}$, $\x \in \ov N \cap \ov \g \cap \Psi_{N,\ov N }(M)$ and $\d_N \in dom(b^{p, \x})$, then $(\Psi_{N,\ov N }(M), \x+1) \in \D_{q|_{\x+1}}$ and $\delta_M$ is a fixed point of $f^{p, \x}$. In particular, these hypothesis imply that $q\av_\x$ forces $\dot C^\x_{\d_N}\cap[\eta,\,\d_M]=\emptyset$.
\end{itemize}

\end{claim}

\begin{proof}
In order to prove $(b)$, it is enough to note that $\Psi_{N,\ov N }(M) =\Psi_{N',\ov N }(M')$ and to apply clauses $(C1)$ and $(C3.3)$ to condition $q\av_{\x+1}$.
\end{proof}

Let $\{\d_0,\ldots\d_{l-1}\} =\{\d_{N'}\,:\,N'\in\D_{q\av_0}^{-1}(0)\}\cap\d_M$. By correctness of $M$ and since $M$ contains all relevant objects, there is a condition $t = (\ov t, \D_t)\in M$ satisfying the following properties (in $V$).

\begin{itemize}

\it[(1)] $t\in A$.

\it[(2)] For all $W$ in $\D_{q\av_0}^{-1}(0)\cap M$ and for all $\z \in \beta+1$, if $\z \in W$ and $W\in \D_{q\av_\z}^{-1}(\z)$, then $W \in \D_{t\av_\z}^{-1}(\z)$.\footnote{Note that for such a $W$, the set of those $\zeta$ in $M \cap(\beta+1)$ such that $\zeta \in W$ and $W\in \D_{q\av_\z}^{-1}(\z)$ can be correctly computed (in M) by means of a formula using as parameters the structure $W$ and the minimum ordinal in $M$ which is at least the maximum of those $\varsigma$ such that $(W, \varsigma)\in \D_q$.}

\it[(3)] For all $\z\in dom(\ov t)$, if $\z\in dom(p)$, then

\begin{itemize}

\it[(3.1)] $(f^{\ov t, \z}, b^{\ov t, \z}, \mtcl O^{\ov t, \z})$ and $(f^{p, \z}, b^{p, \z}, \mtcl O^{p, \z})$ are forced by $q\av_\z$ to be compatible as $\Theta_{\Phi^\ast(\z)}$--conditions, and

\it[(3.2)] the least point in $dom(f^{\ov t, \z})$ above $dom(f^{p, \z})$ is above $\eta$.
\end{itemize}

\it[(4)] For all $i<l$, for all $\x \in dom(p)$ and for all $(N', \g)\in\D_q$ with $\x<\g$ and $\d_{N'}=\d_N$, if there is no $W$ such that $\x \in W$, $W\in \D_{q\av_{\x+1}}^{-1}(\x+1)$ and $\delta_W=\delta_i$, then letting $\eta = \Psi_{N', N}(\x)$ and $\rho = min((OR\cap M)\setminus \x)$, there is no $W$ such that $\eta \in W$, $W\in \D_{t\av_{\rho}}^{-1}(\rho)$ and $\delta_W=\delta_i$.\footnote{By clause $(C1)$ applied to condition $q\av_{\x+1}$, the existence of a $W$ such that $\Psi_{N', N}(\x) \in W\in \D_{t\av_{\rho}}^{-1}(\rho)$ would imply that $\x = \Psi_{N, N'}(\Psi_{N', N}(\x)) \in \Psi_{N, N'}(W) \in \D_{q\av_{\x+1}}^{-1}(\x+1)$.}


\it[(5)] If $(M', \g)\in\D_t$ and $\d_{M'}\notin\{\d_0,\ldots \d_{l-1}\}$, then $\d_{M'}>\eta$.

\end{itemize}

Using Claim \ref{toronto}, it is easy to check that one can amalgamate $q$ and $t$ into a condition $q^\ast$ with $\D_{q^\ast}$ being the union of $\D_q$ with the set of all pairs $(\Psi_{N, N'}(W), min\{\rho,\, \g\})$ such that $(W, \rho)\in \D_t$, $(N', \g) \in\D_q$, and $\d_{N'}=\d_N$. The closure of $\D_{q^\ast}$ under isomorphisms does not interfere with the elements of $X$ because of condition $(4)$.

\end{proof}

\begin{corollary}\label{proper}
For every $\b\leq \k$, $\mtcl P_\b$ is proper.
\end{corollary}

\begin{proof} For $\b<\k$ the conclusion follows immediately from Lemma \ref{horribilis}. The remaining case follows from the corresponding conclusions for $\b<\k$ together with the $\al_2$--c.c. of $\mtcl P_\k$ and $cf(\k)\geq\o_2$.
\end{proof}

For every $\b<\k$ let $\dot F_\b$ and $\dot B_\b$ be $\mtcl P_\k$--names for, respectively, the union of all functions $f$ for which there is a condition $q=(p, \D)\in\dot G_\k$ such that $p(\b)=(f, b, \mtcl O)$ for some $b$ and $\mtcl O$, and the union of all $b$ for which there is a condition $q=(p, \D)\in\dot G_\k$ such that $p(\b)=(f, b, \mtcl O)$ for some $f$ and $\mtcl O$.

\begin{lemma}\label{ch}
$\mtcl P_\k$ forces $2^{\aleph_0}=\kappa$.

\end{lemma}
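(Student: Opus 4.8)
The goal is to show $\mtcl P_\k$ forces $2^{\al_0}=\k$. This splits into the two standard inequalities. For $2^{\al_0}\geq\k$, the natural strategy is to read off $\k$ many distinct reals (or subsets of $\o_1$) from the generic object. The forcing adds, at each stage $\b<\k$ for which $\Phi^\ast(\b)$ names a genuine club--sequence, a club $C_\b=\mathrm{range}(\dot F_\b)$ measuring it, and more directly it adds the side-condition data $\mtcl N_{\dot G_\b}$. So first I would produce, for each $\b<\k$, a subset $x_\b$ of $\o_1$ coded by the generic below stage $\b$, and argue that the map $\b\mapsto x_\b$ is injective on a set of size $\k$. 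The cleanest route is to exploit the genericity guaranteeing that the clubs (or the generic functions $\dot F_\b$) are pairwise distinct cofinally often: for a club set of $\b$ the object added at stage $\b$ is not in any previous $V^{\mtcl P_\a}$ with $\a<\b$, which is forced by a density argument using that $\Phi^{-1}(\{x\})$ is unbounded in $\k$ and that $\la\mtcl P_\a\ra$ is an iteration (Lemma \ref{compll}). This gives $\k$ distinct reals in the extension, hence $2^{\al_0}\geq\k$.

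For the reverse inequality $2^{\al_0}\leq\k$, the plan is a nice-name counting argument. By Lemma \ref{cc}, $\mtcl P_\k$ is $\al_2$--Knaster and hence has the $\al_2$--c.c., so every real in the extension has a nice name using only countably many antichains, each of size at most $\al_1$. Thus it suffices to bound $|\mtcl P_\k|$, or rather the number of such nice names, by $\k$. Here I would invoke the cardinal arithmetic hypotheses: $2^{<\k}=\k$ controls the number of finite working parts $p$ and of the countable structures $N\in[H(\k)]^{\al_0}$ appearing in side conditions, while $\k^{\al_1}=\k$ absorbs the $\al_1$--sized antichains. A typical condition $(p,\D)$ has finite $p$ and finite $\D$ drawn from $[H(\k)]^{\al_0}\times\k$, and one checks $|[H(\k)]^{\al_0}|=\k^{\al_0}\leq\k^{\al_1}=\k$, so $|\mtcl P_\k|=\k$. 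Counting nice names for reals then gives $\k^{\al_1}=\k$ many, whence $2^{\al_0}\leq\k$.

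**The main obstacle.**
The routine half is the upper bound: once $|\mtcl P_\k|=\k$ and the $\al_2$--c.c. are in hand, the name-counting is a standard calculation feeding on $2^{<\k}=\k$ and $\k^{\al_1}=\k$. The delicate half is the lower bound, specifically verifying that the $\k$ many coded reals are genuinely distinct. The hard part will be isolating exactly what data to extract at each stage and proving a density lemma that forces these data to differ; the natural candidate is to use that $\Phi$ hits every name cofinally so that a fixed club--sequence is ``measured afresh'' unboundedly often, and that by Lemma \ref{compll} the restriction to earlier stages is a complete suborder, so the new club cannot already be decided below. One must take care that the measuring clubs $C_\b$ are not accidentally equal; it is cleaner to diagonalize against the generic side conditions or against the generic functions $\dot F_\b$ directly, arguing by a density argument that for club-many $\b$ the value $\dot F_\b$ encodes a real avoiding a fixed prior enumeration. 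I would present this lower bound first as the substantive content, then dispatch the upper bound by the counting argument sketched above.
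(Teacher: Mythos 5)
Your upper bound is essentially the paper's: by Lemma \ref{cc} the forcing has the $\aleph_2$--c.c., conditions are finite objects drawn from $H(\kappa)$ so $|\mathcal P_\kappa|\leq 2^{<\kappa}=\kappa$, and counting nice names for subsets of $\omega$ using $\kappa^{\aleph_1}=\kappa$ gives $2^{\aleph_0}\leq\kappa$. That half is fine and is exactly how the paper disposes of it.

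The lower bound as you set it up has a genuine gap. The objects you propose to extract --- the measuring clubs $\mathrm{range}(\dot F_\beta)$, or the side--condition data $\mathcal N_{\dot G_\beta}$ --- are subsets of $\omega_1$ (or of $H(\kappa)$), not subsets of $\omega$. Producing $\kappa$ pairwise distinct such objects shows $2^{\aleph_1}\geq\kappa$, which does not imply $2^{\aleph_0}\geq\kappa$; so your sentence ``this gives $\kappa$ distinct reals'' does not follow from what precedes it. You gesture at the repair (``$\dot F_\beta$ encodes a real'') but never identify the real, and that identification is the whole content of this half of the lemma. The paper's observation is that $\dot F_\beta\restriction\omega$ is \emph{already} a function from $\omega$ to $\omega$: clause (2) of the definition of $\Theta_{\dot{\mathcal C}}$ requires the working part $f$ to extend to a normal function $F$ with $F(\omega)=\omega$, and all the substantive constraints ((2.1), (2.2) and (3)) bind $f$ only on $\mathrm{dom}(f)\setminus(\omega+1)$. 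Hence below $\omega$ the working part is a freely chosen finite increasing function into $\omega$, and $\dot F_\beta\restriction\omega$ is a Cohen real over the appropriate intermediate extension (this is where Lemma \ref{compll}, which you cite for a different purpose, actually enters). Cohen--genericity at each of the $\kappa$ stages immediately yields $\kappa$ pairwise distinct reals, with no need for the diagonalization against prior enumerations or the club--of--stages bookkeeping you sketch.
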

\begin{proof}
In order to prove that $\mtcl P_\k$ forces $2^{\aleph_0} \geq \kappa$, it suffices to note that if $\beta < \kappa$, then the restriction of $\dot F_\b$ to $\omega$ is forced to be a Cohen real (recall that $\dot F$ is a name for a normal function having $\omega$ as a fixed point).
The other inequality follows from counting nice names for subsets of $\omega$ using the fact that $\k^{\o_1}=\k$ together with Lemma \ref{cc}.
\end{proof}

\begin{lemma}\label{measuring}
For all $\b<\k$, $\mtcl P_\k$ forces that $range(\dot F_\b)$ is a club of $\o_1$ measuring $\Phi^\ast(\b)$.
\end{lemma}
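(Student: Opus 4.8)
The plan is to pass to the extension $V[G_\k]$, where $G_\k$ is $\mtcl P_\k$--generic, and set $G_\b=G_\k\cap\mtcl P_\b$, which is $\mtcl P_\b$--generic by Lemma \ref{compll}. Then $\mtcl C=(C_\d)_{\d<\o_1}:=\Phi^\ast(\b)[G_\b]$ is a club--sequence, and I write $F:=\dot F_\b^{G_\k}$, $C:=range(F)$ and $B:=\dot B_\b^{G_\k}$ for the objects read off from the $\b$--th coordinates of the conditions in $G_\k$. Since $\mtcl C$ is a $\mtcl P_\b$--name, each $C_\d$ is fixed by $G_\b$ and is undisturbed by later coordinates; hence every density argument below may be carried out at the $\b$--th coordinate, extending $p(\b)=(f^{p,\b},b^{p,\b},\mtcl O^{p,\b})$ while leaving the rest of a condition intact (legitimate by Lemma \ref{compll} and the local character of clauses $(C3.1)$--$(C3.3)$). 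The proof splits into (I) showing $C$ is a club of $\o_1$, and (II) showing $C$ measures $\mtcl C$ via a dichotomy on whether a given index enters $dom(B)$.

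For (I), I would first show by density that $dom(F)$ is forced to be all of $\o_1$: given $q$ and $\n<\o_1$, one extends the $\b$--th coordinate so that $\n\in dom(f^{p,\b})$ with $f(\n)=\d_N$ for a suitable $N$, adjoining $N$ and its symmetric images to $\mtcl O^{p,\b}$; clause (2.2) can be met because, exactly as in the computation in the proof of $(1)_\b$ of Lemma \ref{horribilis}, genericity forces $rank(\mtcl N_{\dot G_\b}\cap\mtcl M^{\b+2}_\ast,N)$ to be as large as needed. That $F$ is increasing is immediate from clause (2), and continuity at a limit $\n$ follows from (2.1)--(2.2): there $f(\n)=\d_N$ with $rank(\mtcl N_{\dot G_\b}\cap\mtcl M^{\b+2}_\ast,N)\geq\n\geq 1$, so $\d_N$ is a limit of $\d_M$ for models $M$ below $N$ in the system, and these $\d_M$ are densely the values $F(\n')$ for $\n'<\n$; hence $F(\n)=\sup_{\n'<\n}F(\n')$. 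Thus $F$ is normal and $C$ is closed and unbounded.

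For (II), fix $\d\in C$. If $\d$ is not a limit point of $C$ then $C\cap\d$ is bounded below $\d$ and measuring is trivial ($\a=\sup(C\cap\d)+1$), so assume $\d=\sup(C\cap\d)$; then $\d=F(\n)=\d_N$ for some limit $\n$ and some $N$ lying in $\mtcl O^{p,\b}$ for conditions $p$ in $G_\k$. The dichotomy is:
\begin{itemize}
\it[$\bullet$] if $\n\in dom(B)$, measuring holds in the \emph{disjoint} form: clause (3.2), read off the conditions of $G_\k$ supporting $b(\n)$, gives $f(\n_0)\notin C_\d$ for every index $\n_0$ with $B(\n)<\n_0<\n$, i.e. $(C\setminus\a)\cap C_\d=\emptyset$ with $\a:=F(B(\n))$;
\it[$\bullet$] if $\n\notin dom(B)$, I claim a tail of $C\cap\d$ is \emph{contained} in $C_\d$.
\end{itemize}

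The heart of the matter, and the step I expect to be the main obstacle, is the second clause. The mechanism intended by the authors (cf.\ the discussion after the definition of $\T_{\dot{\mtcl C}}$) is the key density fact that a condition $q$ with $f^{p,\b}(\n)=\d_N=\d$ can be extended so as to place $\n$ into $dom(b^{p,\b})$ precisely when the models $M$ below $N$ in $\mtcl N_{\dot G_\b}\cap\mtcl M^{\b+2}_\ast$ with $\d_M\notin C_\d$ have rank $\geq\n$: for then the symmetry of $\mtcl O^{p,\b}$ (closure under the maps $\Psi_{N,N'}$) lets one route all future values $F(\n')$, $\n'<\n$, into the open set $\d\setminus C_\d$, verifying clauses (3.2) and (3.3); this parallels the amalgamation in the proof of $(2)_\b$ of Lemma \ref{horribilis} and the argument of Lemma \ref{preproper}. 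Consequently, if $\n\notin dom(B)$ then, since $G_\k$ avoids the corresponding dense--below set, genericity forces the $C_\d$--avoiding models under $N$ to have rank $<\n$. It then remains to convert this rank deficiency into the desired conclusion: because clause (2.2) compels every late value $F(\n')=\d_M$ to come from a model of rank $\geq\n'$, while the avoiding models are exhausted below rank $\n$, the models supplying these late values must satisfy $\d_M\in C_\d$, so a tail of $C\cap\d$ lies in $C_\d$. Making this last computation precise — relating the Cantor--Bendixson rank of the full system, that of the $C_\d$--avoiding subsystem, and the ranks realized by $F$, using the symmetry conditions of Definition \ref{symm_system} — is the technical core of the lemma. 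Finally, the finitely many non--limit points of $C$, and the point $\d=\o$, are handled separately, the latter via the Cohen--genericity of $F\restr\o$ (Lemma \ref{ch}), completing the verification that $C$ measures $\mtcl C$.
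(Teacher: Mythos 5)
Your overall architecture is the same as the paper's: clubness of $range(\dot F_\b)$ is dispatched as routine, the case $\n\in dom(\dot B_\b)$ is read off clause (3.2) of the definition of $\T_{\dot{\mtcl C}}$, and the substance lies in showing that when $\n$ never enters $dom(\dot B_\b)$ a tail of $range(\dot F_\b\restr\n)$ falls into $\dot C_{\dot F_\b(\n)}$, via the density dichotomy ``either $\n$ can be placed into $dom(b)$, or the rank at $N$ of the $\dot C_{f(\n)}$--avoiding models is some fixed $\n'<\n$''. Up to that point your proposal matches the paper.

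The gap is in the step you yourself flag as ``the technical core'': you never carry out the conversion of the rank deficiency into tail--containment, and the heuristic you offer for it does not work as stated. You argue that since each late value $F(\n_\circ)=\d_{M_\circ}$ comes from a model of rank $\geq\n_\circ$ (clause (2.2)) while ``the avoiding models are exhausted below rank $\n$'', such $M_\circ$ must satisfy $\d_{M_\circ}\in\dot C_{f(\n)}$. But these are two different rank functions: clause (2.2) bounds $rank(\mtcl N_{\dot G_\b}\cap\mtcl M^{\b+2}_\ast, M_\circ)$ from below, whereas the failure of the $dom(b)$--alternative bounds $rank(\{M\,:\,\d_M\notin \dot C_{f(\n)}\}, N)$ from above \emph{at $N$}; an individual model of very high full--system rank can still have $\d_{M_\circ}\notin \dot C_{f(\n)}$, so nothing is ``exhausted''. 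The paper's actual mechanism is pointwise and uses the witness built into the definition of rank: since $rank(\{M\in\mtcl N_{\dot G_\b}\cap\mtcl M^{\b+2}_\ast\,:\,\d_M\notin \dot C_{f(\n)}\}, N)$ is not $>\n'$, there is a single $x\in N$ such that every $M\in N$ with $x\in M$ and $rank(\mtcl N_{\dot G_\b}\cap\mtcl M^{\b+2}_\ast, M)>\n'$ satisfies $\d_M\in\dot C_{f(\n)}$; one then fixes $M\in\D_{q'}^{-1}(\b)\cap\mtcl M^{\b+2}_\ast\cap N$ with $x\in M$ and $\d_M=f(\n_\ast)$ for some $\n_\ast\in[\n',\n)$, and, given any later value $f^{p'',\b}(\n_\circ)=\d_{M_\circ}$ with $\n_\circ\geq\n_\ast$, uses the symmetry of $\mtcl O^{p'',\b}$ (clause $(\g)$ of Definition \ref{symm_system}, together with rank--invariance under the isomorphisms $\Psi_{N,N'}$) to replace $M_\circ$ by a copy $M'_\circ$ with $M\in M'_\circ$ (hence $x\in M'_\circ$), $\d_{M'_\circ}=\d_{M_\circ}$ and the same rank, so that the witness property of $x$ applies to $M'_\circ$ and yields $f^{p'',\b}(\n_\circ)\in\dot C_{f(\n)}$. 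This extraction of $x$ and the symmetry--based relocation of $M_\circ$ over $x$ are exactly what is missing from your argument; without them the containment half of measuring is not established.
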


\begin{proof}
Let $\dot C_\d$ be, for each $\d$, a $\mtcl P_\b$--name for the $\d$-th member of $\Phi^\ast(\b)$. We want to show that the following conditions hold in $V^{\mtcl P_\k}$:

\begin{itemize}

\it[(A)] $\dot F_\b$ is a normal function with domain $\o_1$.

\it[(B)] For each $\n<\o_1$,

\begin{itemize}

\it[(B1)] if $\n\in dom(\dot B_\b)$, then $range(\dot F_\b\restr (\dot B_\b(\n),\,\n))$ is disjoint from $\dot C_{\dot F(\n)}$, and

\it[(B2)] if $\n\notin dom(\dot B_\b)$, then a tail of $range(\dot F_\b\restr\n)$ is included in $\dot C_{f(\n)}$.
\end{itemize}\end{itemize}

Showing (A) is easy, so here we will only show (B). Note that for every $q=(p, \D)\in\mtcl P_\k$, if $p(\b)=(f, b, \mtcl O)$ and $\n\in dom(f)$, then there is some $q'=(p', \D')$ extending $q$ such that, letting $p'(\b)=(f', b', \mtcl O')$, either

\begin{itemize}
\it[(a)] $\n\in dom(b')$, or

\it[(b)] $q'$ forces $rank(\{M \in \mtcl N_{\dot G_\b} \cap \mtcl M^{\b+2}_\ast\,:\,\d_M\notin \dot C_{f(\n)}\}, N)=\n'$ for some given $\n'<\n$ for every (equivalently, for some) $N\in \mtcl O'$ such that $\d_N=f(\n)$.

\end{itemize}

It is enough to assume (b) and show that $q'$ forces that a tail of $range(\dot F_\b\restr\n)$ is included in $\dot C_{f(\n)}$. For this, fix an $N$ as in (b) and, extending $q'$ if necessary, fix also $x\in N$ such that $q'\av_\b$ forces that if $M\in N$ is such that $x\in M$ and $rank(\mtcl N_{\dot G_\b}\cap\mtcl M^{\b+2}_\ast, M)>\n'$, then $\d_M\in \dot C_{f(\n)}$. By further extending $q'$ if necessary we may assume that $x\in M$ for some $M\in \D_{q'}^{-1}(\b)\cap\mtcl M^{\b+2}_\ast\cap N$ such that $x\in M$ and $\d_M=f(\n_\ast)$ for some $\n_\ast\geq\n'$ below $\n$. Now suppose $q''=(p'', \D_{q''})$ extends $q'$ and suppose $\n_\circ\in dom(f^{p'', \b})$ is in $[\n_\ast,\,\n)$. It suffices to show that $q''\av_\b$ forces $f^{p'', \b}(\n_\circ)\in\dot C_{f(\n)}$.

For this, note that $q''\av_\b$ forces that $f^{p'', \b}(\n_\circ)$ is $\d_{M_\circ}$ for some $M_\circ\in \mtcl O^{p'', \b}\sub \mtcl M^{\b+2}_\ast\cap\mtcl N_{\dot G_\b}$ such that $rank(\mtcl N_{\dot G_\b}\cap\mtcl M^{\b+2}_\ast, M_\circ)\geq\n_\circ$. By symmetry of $\mtcl O^{p'', \b}$\footnote{Specifically, by condition $(\g)$ in the definition of symmetric system.} and since $\d_{M^\circ}>\d_M$ there is then some $M'_\circ\in\mtcl O^{p'', \b}$ such that $M\in M'_\circ$ and $\d_{M'_\circ}=\d_{M_\circ}$. Since, by symmetry, $q''\av_\b$ forces $rank(\mtcl N_{\dot G_\b}\cap\mtcl M^{\b+2}_\ast, M'_\circ) = rank(\mtcl N_{\dot G_\b}\cap\mtcl M^{\b+2}_\ast, M_\circ)\geq \n_\circ$ and since $x\in M'_\circ$, it follows that $q''_\b$ forces $f^{p'', \b}(\n_\circ)=\d_{M'_\circ}\in\dot C_{f(\n)}$, which is what we wanted.
\end{proof}

\begin{corollary}
$\mtcl P_\k$ forces measuring.
\end{corollary}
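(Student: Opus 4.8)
The plan is to deduce measuring in $V^{\mtcl P_\k}$ directly from Lemma \ref{measuring}, by means of a reflection argument that places any given club--sequence into some intermediate model $V^{\mtcl P_\a}$ and then exploits the redundancy of the bookkeeping function $\Phi$.

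First, recall that measuring is equivalent to its restriction to club--sequences, so it suffices to fix, in $V^{\mtcl P_\k}$, an arbitrary club--sequence $\mtcl C=(C_\d)_{\d<\o_1}$ and produce a club of $\o_1$ measuring it. I would begin by showing that $\mtcl C$ already belongs to $V^{\mtcl P_\a}$ for some $\a<\k$. The point is that $\mtcl C$ is coded by a subset of $\o_1\times\o_1$ of size $\al_1$, so a name for it is determined by $\al_1$--many maximal antichains, each of size at most $\al_1$ by the $\al_2$--chain condition (Lemma \ref{cc}); thus such a name is built from $\al_1$--many conditions of $\mtcl P_\k=\bigcup_{\b<\k}\mtcl P_\b$ (Lemma \ref{suborder}). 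Since $\k^{\al_1}=\k$ implies $cf(\k)\geq\o_2$, and each of these conditions lies in some $\mtcl P_\b$ with $\b<\k$, all of them lie in a single $\mtcl P_\a$ with $\a<\k$; using $2^{<\k}=\k$ (so $|H(\k)|=\k$) together with $\k^{\al_1}=\k$, the resulting nice name $\dot{\mtcl C}$ can moreover be taken inside $H(\k)$. As $\mtcl P_\a$ is a complete suborder of $\mtcl P_\k$ (Lemma \ref{compll}), $\dot{\mtcl C}$ is a $\mtcl P_\a$--name for a club--sequence.

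Next I would use the defining property of $\Phi$, namely that every fibre $\Phi^{-1}(\{x\})$ is unbounded in $\k$. Applying this to $x=\dot{\mtcl C}\in H(\k)$, choose $\b$ with $\a\leq\b<\k$ and $\Phi(\b)=\dot{\mtcl C}$. Because a $\mtcl P_\a$--name is literally a $\mtcl P_\b$--name and $\mtcl P_\a$ embeds completely into $\mtcl P_\b$, the name $\dot{\mtcl C}$ is a $\mtcl P_\b$--name for a club--sequence, whence $\Phi^\ast(\b)=\Phi(\b)=\dot{\mtcl C}$ by the definition of $\Phi^\ast$. Lemma \ref{measuring} now applies at this $\b$: $\mtcl P_\k$ forces that $range(\dot F_\b)$ is a club of $\o_1$ measuring $\Phi^\ast(\b)$, that is, measuring $\mtcl C$. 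Since $\mtcl C$ was an arbitrary club--sequence of the extension, measuring holds in $V^{\mtcl P_\k}$.

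I expect the only delicate step to be the reflection of $\mtcl C$ into an intermediate model. The chain--condition count itself is routine, but one must verify that a \emph{single} $\a<\k$ simultaneously captures all $\al_1$--many antichains; this is exactly where $cf(\k)\geq\o_2$ (derived from $\k^{\al_1}=\k$) is used, together with the fact that every condition of $\mtcl P_\k$ has support bounded below $\k$. Everything after the reflection — the choice of $\b$ via $\Phi$ and the invocation of Lemma \ref{measuring} — is then immediate.
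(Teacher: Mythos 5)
Your proposal is correct and follows exactly the route the paper intends: the paper's own justification is the one--line remark that the corollary follows from Lemmas \ref{measuring}, \ref{cc} and \ref{horribilis} because $\Phi$ is a book-keeping function, and your argument simply fleshes out that reflection-plus-bookkeeping step (using the $\al_2$--c.c. and $cf(\k)\geq\o_2$ to capture the name at some stage $\a<\k$, then choosing $\b\geq\a$ with $\Phi(\b)=\dot{\mtcl C}$). The only implicit ingredient worth noting is that properness (Lemma \ref{horribilis}) is what guarantees $\o_1$ is preserved, so that the cardinality count behind the reflection is as you state.
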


The above corollary follows from Lemmas \ref{measuring}, \ref{cc} and \ref{horribilis} (since $\Phi$ was chosen to be a book-keeping function), and finishes the proof of Theorem \ref{mainthm}.

\newpage

\end{document}